\newtheorem{theorem}{Theorem}[section]
\newtheorem{corollary}[theorem]{Corollary}
\newtheorem{lemma}[theorem]{Lemma}
\newtheorem{claim}{Claim}
\newtheorem{proposition}[theorem]{Proposition}
\newtheorem{step}[theorem]{Step}
\newtheorem{question}[theorem]{Question}
\theoremstyle{definition}
\newtheorem{definition}[theorem]{Definition}
\theoremstyle{remark}
\newtheorem{remark}[theorem]{Remark}
\begin{document}

\title[A classification of maximally symmetric surfaces in $T^3$]
{A classification of maximally symmetric surfaces in the 3-dimensional torus}

\author{Chao Wang}
\address{Jonsvannsveien 87B, H0201, Trondheim 7050, NORWAY}
\email{chao\_{}wang\_{}1987@126.com}

\author{Bruno Zimmermann}
\address{Dipartimento di Matematica e Geoscienze, Universita degli Studi di Trieste, Trieste 34100, ITALY}
\email{zimmer@units.it}

\subjclass[2010]{Primary 57M60; Secondary 57S25}

\keywords{surfaces embedded in the 3-torus, finite group action, Euclidean 3-orbifold}

\thanks{The first author was supported by NNSFC (No. 11501534).}

\begin{abstract}
If a finite group of orientation-preserving diffeomorphisms of the 3-dimensional torus leaves invariant an oriented, closed, embedded surface of genus $g>1$ and preserves the orientation of the surface, then its order is bounded from above by $12(g-1)$. In the present paper we classify (up to conjugation) all such group actions and surfaces for which the maximal possible order $12(g-1)$ is achieved, and note that the unknotted surfaces can be realized by equivariant minimal surfaces in a 3-torus.
\end{abstract}

\date{}
\maketitle

\section{Introduction}\label{sec:introduction}
All manifolds and maps considered in the present paper are smooth, and group actions are faithful and orientation-preserving.

\begin{definition}\label{def:extendable}
Let $\Sigma_g$ be a closed, connected, orientable surface of genus $g>1$ and $G$ be a finite group. A $G$-action on $\Sigma_g$ is {\it extendable} over a 3-manifold $M$ with respect to an embedding $e:\Sigma_g\rightarrow M$ if there is a $G$-action on $M$ such that $h\circ e=e\circ h$, for all $h\in G$. Identifying $\Sigma_g$ with $e(\Sigma_g)$, we will also shortly say that $G$ acts on the pair $(M,\Sigma_g)$. We will always assume $g>1$ in the present paper.
\end{definition}

A classical result of Hurwitz says that the order of a finite group action on a surface $\Sigma_g$ is bounded by $84(g-1)$ \cite{Hu}, and an action realizing this bound is usually called a Hurwitz action; in general, the Hurwitz actions are not classified. For actions on surfaces which extend to a 3-dimensional handlebody, the upper bound is $12(g-1)$ \cite{Z1}, and again the actions realizing this upper bound are not classified. More generally, if we require that the actions on surfaces extend to a certain 3-manifold $M$, then there will be an upper bound and hopefully one can classify the actions realizing this bound.

The most natural choices of $M$ include the 3-dimensional Euclidean space $\mathbb{R}^3$ and the 3-dimensional sphere $\mathbb{S}^3$. In each case, the classification does exist, and it is stronger in the sense that for each given $g$ the maximum of the group order can be obtained and the actions realizing the maximum can be classified (see \cite{WWZZ3} for $\mathbb{R}^3$ and \cite{WWZZ1,WWZZ2} for $\mathbb{S}^3$).

The 3-dimensional torus $\mathbb{T}^3$ is another natural choice. In this case, the upper bound is again $12(g-1)$ as shown in \cite{BRWW}, as a consequence of the equivariant loop theorem \cite{MY} (since an embedded surface of genus $g>1$ in the 3-torus has to be compressible) and the formula of Riemann-Hurwitz. In \cite{BRWW}, various series of actions of maximal possible order $12(g-1)$ are constructed and a conjectural picture of the situation is given. In the present paper, we obtain a complete classification for the maximal case $12(g-1)$; in particular, this confirms the conjecture in \cite{BRWW}.

\begin{definition}\label{def:knotted}
A closed subsurface in a closed 3-manifold is {\it unknotted} if the surface separates the 3-manifold into two handlebodies (so it is a Heegaard surface of a Heegaard splitting of the 3-manifold), otherwise the surface is {\it knotted}.
\end{definition}

\begin{theorem}\label{thm:classification}
If a $G$-action on $\Sigma_g$ with order $12(g-1)$ is extendable over $\mathbb{T}^3$, then $g-1$ has one of the forms: $2n^3$, $4n^3$, $8n^3$, $n^2$, $3n^2$ where $n\in\mathbb{Z}_+$ is a positive integer. Up to conjugation, such actions on the pair $(\mathbb{T}^3,\Sigma_g)$ are listed below:
\[\begin{array}{ccc|ccccc|c}
  2n^3 & 2n^3 & 2n^3 & 2n^3(2\nmid n) & 2n^3(2\nmid n) & 2n^3(2\nmid n) & 2n^3(2\nmid n) & 2n^3(3\nmid n) & n^2\\
  4n^3 & 4(2n)^3 & 4n^3 & & & 4n^3(2\nmid n) & & 4n^3(3\nmid n) & 3n^2\\
  8n^3 & 8n^3 & 8n^3 & & 8n^3(2\nmid n) & & & 8n^3(3\nmid n) &
\end{array}\]
Here each number $m$ represents an action on $(\mathbb{T}^3,\Sigma_{m+1})$. If $m$ appears $k$ times, then there are $k$ different actions. The actions in the first three columns correspond to unknotted surfaces, all others to knotted ones.
\end{theorem}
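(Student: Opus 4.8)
\emph{Proof proposal.} The plan is to turn the question about the smooth pair $(\mathbb{T}^3,\Sigma_g)$ into a finite classification of quotient‑orbifold data. First I would pass to a geometric model: a finite orientation‑preserving action on $\mathbb{T}^3$ is conjugate to an action by Euclidean isometries, because the quotient is an aspherical closed $3$‑orbifold whose orbifold fundamental group is virtually $\mathbb{Z}^3$, hence it carries a Euclidean structure by geometrization of $3$‑orbifolds. So we may write $\mathbb{T}^3=\mathbb{R}^3/\Lambda$ with $G$ acting isometrically; the preimage $\Gamma$ of $G$ in $\mathrm{Isom}(\mathbb{R}^3)$ is a crystallographic group with $\Lambda\trianglelefteq\Gamma$ of finite index and $\Gamma/\Lambda=G$, and its translation lattice $L$ and point group $P=\Gamma/L\le SO(3)$ are intrinsic, with $\Lambda\le L$ a $P$‑invariant sublattice of index $\abs{G}/\abs{P}$. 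Thus the object to classify is a triple (crystallographic group $\Gamma$, normal sublattice $\Lambda$, invariant surface $\Sigma_g$) up to the obvious equivalence, and by multiplicativity of the orbifold Euler characteristic the quotient $2$‑orbifold $F=\Sigma_g/G\subset\mathcal{O}=\mathbb{T}^3/G$ satisfies $\chi^{\mathrm{orb}}(F)=\chi(\Sigma_g)/\abs{G}=-1/6$.

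Next I would pin down $F$ completely. Since $G$ preserves the orientations of both $\Sigma_g$ and $\mathbb{T}^3$, each point stabilizer is cyclic and fixes a transverse geodesic arc pointwise, hence is generated by a rotation of $\mathbb{R}^3$ about an axis whose linear part preserves $\Lambda$; the crystallographic restriction forces its order into $\{2,3,4,6\}$, and $F$ is a closed orientable $2$‑orbifold all of whose cone orders lie in that set. Combined with $\chi^{\mathrm{orb}}(F)=-1/6$, an elementary enumeration leaves exactly the possibilities $S^2(2,6,6)$, $S^2(3,3,6)$, $S^2(3,4,4)$ and $S^2(2,2,2,3)$; any of these that cannot be realized as a two‑sided suborbifold of a closed flat $3$‑orbifold with an admissible complement will be discarded in the course of the argument.

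Then comes the reconstruction. An embedded surface of genus $g>1$ in $\mathbb{T}^3$ is compressible, so the equivariant loop theorem \cite{MY} supplies a $G$‑invariant compression; as $G$ preserves the two co‑orientations of $\Sigma_g$, it preserves the sides, and iterated equivariant compression cuts $\mathbb{T}^3$ along $\Sigma_g$ into pieces $V_1,V_2$ (or a single piece when $\Sigma_g$ is non‑separating) carrying induced $G$‑actions, which are handlebodies precisely when $\Sigma_g$ is unknotted. The handlebody bound $\abs{G}\le 12(h-1)$ of \cite{Z1}, together with $\abs{G}=12(g-1)$, forces these pieces, when handlebodies, to carry extremal handlebody actions, whose structure is known; in the knotted case the complementary orbifold pieces are analyzed directly. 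For each candidate $F$ I would then list the closed orientable Euclidean $3$‑orbifolds $\mathcal{O}$ obtained by gluing two such pieces along $F$, read off $P$ and the admissible $P$‑invariant sublattices $\Lambda\le L$, and use $g-1=\abs{G}/6$ together with the scaling freedom of the lattice in these constructions to get $\abs{G}\in\{12n^3,24n^3,48n^3,6n^2,18n^2\}$, hence $g-1\in\{2n^3,4n^3,8n^3,n^2,3n^2\}$; the divisibility conditions $2\nmid n$ and $3\nmid n$ in the table are exactly the conditions under which the relevant $P$‑invariant sublattice exists and the surface stays embedded. Finally I would record for each configuration whether the pieces lift to genuine handlebodies (unknotted versus knotted), prove pairwise non‑conjugacy by comparing the quotient pairs $(\mathcal{O},F)$ together with the knot type of $\Sigma_g$, and establish completeness by exhausting the finite case list.

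I expect the main obstacle to be this last step: systematically working through the closed flat $3$‑orbifolds to decide exactly which ones contain an embedded two‑sided suborbifold of one of the four admissible types, while simultaneously tracking the orbifold structure, the handlebody (i.e.\ knottedness) condition, the conjugacy invariants, and the arithmetic of $P$‑invariant sublattices. That is where the whole table — the multiplicities, the split into knotted and unknotted surfaces, and the congruence restrictions on $n$ — must be extracted, and where the greatest care is needed to avoid both omissions and repetitions.
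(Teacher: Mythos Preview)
Your overall framework—geometrize, pass to the quotient pair $(\mathcal{O},\mathcal{F})$, classify the admissible pairs, then classify the normal sublattices $\Lambda\le L$—is the paper's strategy. Two remarks on execution. First, the three triangle orbifolds $S^2(2,6,6)$, $S^2(3,3,6)$, $S^2(3,4,4)$ die immediately: $\Sigma_g$ is compressible in $\mathbb{T}^3$, so the equivariant loop theorem forces $\mathcal{F}$ to admit a compressing disk in $\mathcal{O}$, which a hyperbolic triangle orbifold does not; hence $\mathcal{F}=S^2(2,2,2,3)$ from the outset. Second, rather than building $\mathcal{O}$ by gluing compressed pieces, the paper runs through Dunbar's explicit list of closed Euclidean $3$-orbifolds and picks out those in which $S^2(2,2,2,3)$ occurs as the boundary of a regular neighbourhood of a singular edge; this yields exactly nine pairs $(\mathcal{O},\mathcal{F})$ directly.

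There is, however, a genuine gap in your explanation of the congruence restrictions. You attribute the conditions $2\nmid n$ and $3\nmid n$ to whether ``the relevant $P$-invariant sublattice exists and the surface stays embedded,'' but neither of these is the issue: the $P$-invariant sublattices exist for every $n$ (three one-parameter families under the octahedral point group, two under the hexagonal one), and the preimage is always embedded. What actually fails for the excluded $n$ is that the preimage $p^{-1}(\mathcal{F})\subset\mathbb{T}^3$ becomes \emph{disconnected}. The paper isolates this as a separate criterion (the preimage is connected iff $\hat{i}_*(\pi_1(p_0^{-1}(\eta)))\cdot\Lambda=L$, a specialization of their connectedness lemma), and then for each of the nine marked edges $\eta$ computes the image of $\pi_1$ of the graph $p_0^{-1}(\eta)$ inside the maximal translation lattice $L$ explicitly—for instance $T_{108}\subset T_4$ in the case $([\text{I}4_132],\beta)$, which is exactly what produces the $3\nmid n$ restriction. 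For the three unknotted edges $\alpha$ this image is all of $L$, so no restriction appears; for each knotted edge it is a proper sublattice, and the divisibility conditions in the table are precisely the conditions under which $\Lambda$ still complements that sublattice. Without this connectedness mechanism your sketch has no way to generate the congruence constraints, and in particular cannot explain why the unknotted columns carry none while every knotted column does.
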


For example, since $64=8\times 2^3=8^2$, there are five actions on $(\mathbb{T}^3,\Sigma_{65})$ realizing the maximal order $12\times 64=768$. For three of them the surface is unknotted and for two of them the surface is knotted. We will derive Theorem \ref{thm:classification} from a stronger classification result Theorem \ref{thm:main} in Section \ref{sec:find coverings}, and the nine columns in Theorem \ref{thm:classification} correspond to the nine cases of Theorem \ref{thm:main}. Theorem \ref{thm:main} shows that all actions realizing the bound $12(g-1)$ are actually listed in the examples of \cite{BRWW}, in particular it follows that all the unknotted surfaces can be realized by equivariant minimal surfaces.

\begin{corollary}\label{cor:minimal}
If a Heegaard surface $\Sigma_g$ of $\mathbb{T}^3$ is invariant under a finite group action of order $12(g-1)$, then it can be realized by an equivariant minimal surface for some Euclidean structure on $\mathbb{T}^3$.
\end{corollary}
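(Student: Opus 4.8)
The plan is to deduce Corollary~\ref{cor:minimal} directly from the stronger structural result Theorem~\ref{thm:main} (and hence from the explicit classification in Theorem~\ref{thm:classification}), rather than attempting any new geometric analysis. The first step is to recall that by Theorem~\ref{thm:classification} the surfaces falling under the hypothesis of the corollary, namely the \emph{unknotted} (Heegaard) surfaces invariant under a group of order $12(g-1)$, are precisely those occurring in the first three columns of the table, with $g-1 \in \{2n^3, 4n^3, 8n^3\}$. So it suffices to exhibit, for each of these finitely many families of actions on pairs $(\mathbb{T}^3,\Sigma_g)$, a Euclidean structure on $\mathbb{T}^3$ for which the $G$-action is realized by isometries and for which $\Sigma_g$ is isotoped (equivariantly) onto a minimal surface.

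The second step is to invoke the remark already made in the paper: Theorem~\ref{thm:main} shows that \emph{all} actions realizing the bound, in particular all the unknotted ones, are among the examples constructed in \cite{BRWW}. Thus the proof reduces to pointing at those constructions: in \cite{BRWW} each unknotted example is built as a quotient orbifold picture in which $\mathbb{T}^3$ carries a flat metric making $G$ act by affine isometries, and the invariant Heegaard surface arises from a doubly periodic minimal surface in $\mathbb{R}^3$ (a triply periodic minimal surface such as the Schwarz P-surface, D-surface, or gyroid, or the appropriate period-$n$ rescalings thereof) that is invariant under the relevant crystallographic group. One then observes that the equivariant minimal surface so obtained is isotopic, through $G$-equivariant isotopies, to the given $\Sigma_g$, because both are Heegaard surfaces of the same genus in $\mathbb{T}^3$ realizing the same action, and the classification (Theorem~\ref{thm:main}) says there is only one such pair up to conjugation in each family.

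For the existence of the minimal representative itself one appeals to equivariant minimal surface theory: given the flat metric on $\mathbb{T}^3$ and the finite isometry group $G$, together with the isotopy class of the incompressible-after-compression Heegaard surface, one can minimize area in the equivariant isotopy class. Since $g>1$ the surface is non-trivial and the infimum of area is positive; a standard compactness/regularity argument (in the flat three-torus there is no loss of area to infinity, and the relevant surfaces are two-sided and non-degenerate) produces a smooth embedded minimal surface invariant under $G$. Alternatively, and more concretely, one simply cites the explicit classical triply periodic minimal surfaces and their finite-index symmetry groups, which already realize exactly the groups and genera listed.

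The main obstacle is bookkeeping rather than geometry: one must check that the flat structure and the symmetry group attached to each known minimal surface in \cite{BRWW} match, family by family, the nine (here three) cases of the classification, i.e.\ that the combinatorial data of the $G$-action on $(\mathbb{T}^3,\Sigma_g)$ arising from the minimal surface coincides up to conjugacy with the data in Theorem~\ref{thm:main}. Once that matching is verified—which is essentially done by construction in \cite{BRWW} and re-confirmed by Theorem~\ref{thm:main}—the corollary follows. I would present the argument as: (i) restrict to the three unknotted families via Theorem~\ref{thm:classification}; (ii) quote that these coincide with the \cite{BRWW} examples via Theorem~\ref{thm:main}; (iii) note each such example is given by a flat metric plus an explicit equivariant triply periodic minimal surface; (iv) conclude by uniqueness of the Heegaard pair in each family.
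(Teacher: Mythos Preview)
Your proposal is correct and follows essentially the same route as the paper: the corollary is derived directly from Theorem~\ref{thm:main}, which identifies every maximal action (in particular every unknotted one) with one of the explicit examples in \cite{BRWW}, and those unknotted examples are already constructed there as equivariant triply periodic minimal surfaces (Schwarz $P$, $D$, gyroid and their rescalings). Your suggested alternative via equivariant area minimization is unnecessary and is not invoked in the paper; the explicit minimal surfaces from \cite{BRWW} together with the uniqueness furnished by Theorem~\ref{thm:main} already suffice.
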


This confirms the following natural question for Euclidean 3-manifolds. Actually, the 3-torus and the Hantzsche-Wendt manifold (see \cite{Z2}) are the only orientable closed Euclidean 3-manifolds containing such surfaces. The question is also partly confirmed for spherical 3-manifolds. By \cite{La}, \cite{KPS} and \cite{BWW}, it is true for the case of $\mathbb{S}^3$ with three possible exceptions.

\begin{question}
If a Heegaard surface $\Sigma_g$ of an orientable closed geometric 3-manifold $M$ is invariant under a finite group action of order $12(g-1)$, can it be realized by an equivariant minimal surface for some geometric structure on $M$?
\end{question}

An example of a hyperbolic 3-manifold with such a Heegaard surface is the Seifert-Weber dodecahedral space, obtained by identifying opposite faces of a regular hyperbolic dodecahedron with dihedral angles $2\pi/5$, after a twist by $3(2\pi/10)$ of each face (\cite{SW}, [Th2,p.36]). After the identifications, the boundary of a regular neighborhood of the 12 edges connecting the center of the dodecahedron with the centers of its 12 faces gives a Heegaard surface of genus 6, and by [Po,Figure 2(d)], this Heegaard surface can be realized by an equivariant minimal surface, invariant under the action of isometry group $A_5$ of the dodecahedron. Applying the same construction to the regular spherical dodecahedron with dihedral angles $2\pi/3$ and twisting by $2\pi/10$, one obtains the spherical Poincar\'e sphere, with a Heegaard surface of genus 6 invariant under the dodecahedral group $A_5$, and by \cite{KPS} this can again be realized by an equivariant minimal surface. Finally, applying the construction to the Euclidean cube instead, one obtains the 3-torus with a Heegaard surface of genus 3 which can be realized by minimal surface invariant under the
isometry group $S_4$ of the cube (corresponding to the case $n=1$ of the first item in the first row of Theorem \ref{thm:classification}).

To classify the actions in Theorem \ref{thm:classification}, we need the orbifold theory (see \cite{BMP,Du,Th}). After identifying $\Sigma_g$ with $e(\Sigma_g)$, an extendable action gives an orbifold pair $(M/G,\Sigma_g/G)$. Conversely, given a 2-orbifold $\mathcal{F}$ in a 3-orbifold $\mathcal{O}$ and a regular orbifold covering $p:M\rightarrow\mathcal{O}$, if $p^{-1}(\mathcal{F})$ is connected, then the group $\pi_1(\mathcal{O})/\pi_1(M)$ acts on the pair $(M,p^{-1}(\mathcal{F}))$. If $M$ is $\mathbb{R}^3$ or $\mathbb{S}^3$, then finding all the pairs $(\mathcal{O},\mathcal{F})$ is enough (as in \cite{WWZZ2,WWZZ3}), because $\mathbb{R}^3$ and $\mathbb{S}^3$ are simply connected and $p$ is determined by $\mathcal{O}$. For $\mathbb{T}^3$ further information about the covering $p$ is needed.

Let $G$ be a finite group which acts on a pair $(\mathbb{T}^3,\Sigma_g)$ and has order $12(g-1)$; by the Riemann-Hurwitz formula, the quotient 2-orbifold $\mathcal{F}=\Sigma_g/G$ is a sphere with four singular points of indices $2,2,2,3$; by the geometrization of finite group actions on 3-manifolds, we can assume that $G$ acts by Euclidean isometries on $\mathbb{T}^3$, for some Euclidean structure on $\mathbb{T}^3$. Then the quotient 3-orbifold $\mathcal{O}=\mathbb{T}^3/G$ is a Euclidean orbifold; by a Bieberbach theorem, there is a minimal covering $p_0:\mathbb{T}^3\rightarrow\mathcal{O}$ such that for any covering $p:\mathbb{T}^3\rightarrow\mathcal{O}$ there is a covering $q:\mathbb{T}^3\rightarrow\mathbb{T}^3$ satisfying $p=p_0\circ q$. Hence the classification factors into two steps:

\begin{step}\label{ste:list pair}
List all pairs $(\mathcal{O},\mathcal{F})$ such that $\mathcal{O}$ is a Euclidean 3-orbifold and $\mathcal{F}$ is a sphere with four singular points of indices $2,2,2,3$.
\end{step}

\begin{step}\label{ste:find covering}
For a given pair $(\mathcal{O},\mathcal{F})$ in Step \ref{ste:list pair} find all coverings $q$ such that $p=p_0\circ q$ is a regular covering and $p^{-1}(\mathcal{F})$ is connected.
\end{step}

In section \ref{sec:list pairs}, we will finish Step \ref{ste:list pair} by using Dunbar's list of Euclidean 3-orbifolds. In section \ref{sec:find coverings}, we will finish Step \ref{ste:find covering} by finding all the possible normal subgroups of $\pi_1(\mathcal{O})$ corresponding to $p$. In section \ref{sec:example}, we will give an explicit example.

\section{List the pairs $(\mathcal{O},\mathcal{F})$}\label{sec:list pairs}
The way to list the orbifold pairs in Step \ref{ste:list pair} is similar to \cite{WWZZ2}. First, we need some general results and conventions from \cite{Du}.

In \cite{Du}, the Euclidean 3-orbifolds are classified. There are two classes: the fibred ones and the non-fibred ones. The fibred ones are the Seifert fibred orbifolds having Euler number $0$ and base 2-orbifold with Euler characteristic $0$. The non-fibred ones are listed in \cite{Du}. Moreover, the singular sets, which are trivalent graphs, of the Euclidean orbifolds with underlying space $\mathbb{S}^3$ are pictured, and the names of the fundamental groups of the orbifolds are given.

\begin{definition}\label{def:2-orbifold}
Let $\mathbb{S}^2(d_1,\ldots,d_s)$ denote the 2-orbifold which is a sphere with $s$ singular points of indices $d_1,\ldots,d_s$; let $\mathbb{D}^2(d_1,\ldots,d_s)$ denote the 2-orbifold which is a disk with $s$ singular points of indices $d_1,\ldots,d_s$ in the interior.

Let $\mathbb{D}^2(d_1,\ldots,d_s;n_1,\ldots,n_r)$ be the 2-orbifold which is a disk with $s$ singular points of indices $d_1,\ldots,d_s$ in the interior and $r$ corner points of groups $D_{n_1},\ldots,D_{n_r}$ in the boundary. Here $D_n$ denotes the dihedral group of order $2n$, and the boundary points other than the corner points are reflection points.
\end{definition}

\begin{lemma}\label{lem:fibred}
If $(\mathcal{O},\mathcal{F})$ is a pair as in Step \ref{ste:list pair} and $\mathcal{O}$ is fibred, then the base 2-orbifold of $\mathcal{O}$ is $\mathbb{D}^2(-;2,3,6)$. As a consequence, $\mathcal{O}$ has underlying space $\mathbb{S}^3$.
\end{lemma}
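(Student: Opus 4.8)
The plan is to use the Seifert fibration of $\mathcal{O}$ to produce a Seifert fibration on the pair $(\mathcal{O},\mathcal{F})$, or rather to understand how $\mathcal{F}$ can sit inside a fibred Euclidean $3$-orbifold, and then to match the constraint that $\mathcal{F}=\mathbb{S}^2(2,2,2,3)$ against the list of possible base $2$-orbifolds. First I would recall from \cite{Du} that a fibred Euclidean $3$-orbifold $\mathcal{O}$ is a Seifert fibred orbifold with Euler number $0$ over a base $2$-orbifold $\mathcal{B}$ with $\chi(\mathcal{B})=0$; the relevant closed possibilities for $\mathcal{B}$ are $\mathbb{S}^2(2,2,2,2)$, $\mathbb{S}^2(2,3,6)$, $\mathbb{S}^2(2,4,4)$, $\mathbb{S}^2(3,3,3)$, the torus, the Klein bottle, and their quotients by reflections, which give the disk orbifolds $\mathbb{D}^2(-;2,2,2,2)$, $\mathbb{D}^2(-;2,3,6)$, $\mathbb{D}^2(-;2,4,4)$, $\mathbb{D}^2(-;3,3,3)$, the annulus and Möbius band, etc. Since $\mathcal{O}=\mathbb{T}^3/G$ with $G$ orientation-preserving, $\mathcal{O}$ is an orientable $3$-orbifold; this already restricts which of these bases can occur.

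Next I would analyze the position of $\mathcal{F}$ relative to the Seifert fibration. Since $\mathcal{F}$ is a $2$-sphere with underlying space $S^2$, it cannot be a union of fibres (a horizontal/vertical sub-orbifold that is saturated would have zero Euler characteristic or be non-spherical), so $\mathcal{F}$ must be isotopic to a \emph{horizontal} sub-orbifold: a branched covering of the base $\mathcal{B}$ via the projection $\mathcal{O}\to\mathcal{B}$. This is the key structural point: the fibration restricts to a (possibly branched, orbifold) covering $\mathcal{F}\to\mathcal{B}$. Computing orbifold Euler characteristics, $\chi^{\mathrm{orb}}(\mathcal{F}) = \chi^{\mathrm{orb}}(\mathbb{S}^2(2,2,2,3)) = 2 - (1/2+1/2+1/2+2/3) = -1/6 < 0$, so such a covering map onto $\mathcal{B}$ with $\chi^{\mathrm{orb}}(\mathcal{B})=0$ is impossible for an unbranched cover; it forces the covering to be branched, and the branch locus together with the covering degree is then pinned down by the local structure at the four cone points $2,2,2,3$ of $\mathcal{F}$. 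Working through the multiplicativity of Euler characteristic under branched covers and comparing the cone-point data of $\mathcal{B}$ with the indices $2,2,2,3$, one finds that $\mathcal{B}$ must be $\mathbb{D}^2(-;2,3,6)$: the disk boundary reflection circle must be present (otherwise $\mathcal{O}$ would not admit an embedded spherical $\mathcal{F}$ meeting the singular set in the required pattern), and the corner indices $2,3,6$ are exactly the ones compatible with lifting to cone points $2,2,2,3$ on the double-branched-cover side. Finally, since the only orientable Euclidean Seifert fibred $3$-orbifold over $\mathbb{D}^2(-;2,3,6)$ has underlying space $\mathbb{S}^3$ (its singular set is the standard trivalent graph pictured in \cite{Du}), the ``as a consequence'' clause follows immediately.

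The main obstacle, I expect, is the middle step: ruling out all other bases $\mathcal{B}$ with $\chi^{\mathrm{orb}}(\mathcal{B})=0$. One must handle each candidate ($\mathbb{D}^2(-;2,2,2,2)$, $\mathbb{D}^2(-;2,4,4)$, $\mathbb{D}^2(-;3,3,3)$, annulus, Möbius band, and the closed bases which cannot occur because an orientable Seifert orbifold over them either is not a quotient of $\mathbb{T}^3$ or does not contain a separating spherical $2$-suborbifold with the right cone data) and show that no spherical $\mathcal{F}$ with cone indices $2,2,2,3$ can embed $G$-equivariantly — equivalently, that the required branched covering $\mathcal{F}\to\mathcal{B}$ does not exist for Euler-characteristic or local-index reasons. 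The cleanest route is to combine the Euler characteristic count $\chi^{\mathrm{orb}}(\mathcal{F})=-1/6$ with the fact that the index $3$ cone point of $\mathcal{F}$ must map to a cone/corner point of $\mathcal{B}$ whose index is divisible by $3$; this alone eliminates every base except those containing a $3$ or $6$, and then a short index-arithmetic argument at the remaining cone points finishes the proof. Once $\mathcal{B}=\mathbb{D}^2(-;2,3,6)$ is isolated, identifying $\mathcal{O}$ and its underlying space is a direct lookup in Dunbar's tables.
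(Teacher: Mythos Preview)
Your argument has a genuine gap at the ``key structural point.'' The vertical/horizontal dichotomy for closed $2$-suborbifolds in a Seifert fibred $3$-orbifold holds only for \emph{essential} (incompressible) suborbifolds, and $\mathcal{F}$ is not essential: the whole setup of the paper (and the equivariant loop theorem invoked in the surrounding lemmas) is that $\mathcal{F}=\mathbb{S}^2(2,2,2,3)$ is compressible in $\mathcal{O}$ and in fact bounds a handlebody orbifold. Your own computation already shows that $\mathcal{F}$ cannot be horizontal: a horizontal suborbifold is an orbifold covering of the base $\mathcal{B}$, so $\chi^{\mathrm{orb}}(\mathcal{F})=d\cdot\chi^{\mathrm{orb}}(\mathcal{B})=0$, contradicting $\chi^{\mathrm{orb}}(\mathcal{F})=-1/6$. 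There is no way to rescue this by allowing the map $\mathcal{F}\to\mathcal{B}$ to be ``more branched''; in the orbifold category a horizontal suborbifold already \emph{is} the orbifold analogue of a branched cover, and no further branching is available. In reality $\mathcal{F}$ sits as the boundary of a regular neighbourhood of a singular edge and is neither vertical nor horizontal, so the branched-cover arithmetic you propose simply does not apply.

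The paper's proof avoids positioning $\mathcal{F}$ relative to the fibration altogether. It argues directly with the singular set of $\mathcal{O}$: the cone points of $\mathcal{F}$ of index $2$ and $3$ are transverse intersections with the singular locus of $\mathcal{O}$, so the base $\mathcal{B}$ must carry local groups containing elements of orders $2$ and $3$. Among Euclidean $2$-orbifolds this already cuts the list to $\mathbb{S}^2(2,3,6)$, $\mathbb{D}^2(3;3)$, $\mathbb{D}^2(-;3,3,3)$, $\mathbb{D}^2(-;2,3,6)$. The product case $\mathbb{S}^2(2,3,6)\times\mathbb{S}^1$ is excluded by inspection. For $\mathbb{D}^2(3;3)$ and $\mathbb{D}^2(-;3,3,3)$ one uses that the underlying space of $\mathcal{O}$ is a lens space or $\mathbb{S}^3$ (from Dunbar's tables), so $\mathcal{F}$ separates; but the index-$2$ singular set of $\mathcal{O}$, with its degree-$3$ vertices deleted, is a union of circles, and a circle meets a separating $2$-sphere an even number of times, contradicting the three index-$2$ cone points on $\mathcal{F}$. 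That parity argument, not any covering-degree count, is what eliminates the extra bases. (Also, a small inaccuracy: there is not a unique orientable Euclidean Seifert orbifold over $\mathbb{D}^2(-;2,3,6)$; there are several, and one appeals to \cite{Du} to see that each has underlying space $\mathbb{S}^3$.)
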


\begin{proof}
Let $B$ be the base 2-orbifold of $\mathcal{O}$. Then the Euler characteristic of $B$ is $0$. Since $\mathcal{F}$ has singular points with indices $2$ and $3$, $B$ is one of $\mathbb{S}^2(2,3,6)$, $\mathbb{D}^2(3;3)$, $\mathbb{D}^2(-;3,3,3)$ and $\mathbb{D}^2(-;2,3,6)$. If $B$ is $\mathbb{S}^2(2,3,6)$, since the Euler number of $\mathcal{O}$ is $0$, then $\mathcal{O}$ can only be $\mathbb{S}^2(2,3,6)\times\mathbb{S}^1$ which has no suborbifolds of type $\mathbb{S}^2(2,2,2,3)$. If $B$ is $\mathbb{D}^2(3;3)$ or $\mathbb{D}^2(-;3,3,3)$, then by the discussion in section 4 and 5 of \cite{Du} the underlying space of $\mathcal{O}$ is a lens space or $\mathbb{S}^3$. Then $\mathcal{F}$ separates $\mathcal{O}$. Since the singular set of index $2$ in $\mathcal{O}$ consists of circles (with degree $3$ singular points removed), it cannot intersect $\mathcal{F}$ three times. Hence $B$ is $\mathbb{D}^2(-;2,3,6)$ and by the discussion in section 4 and 5 of \cite{Du} the underlying space of $\mathcal{O}$ is $\mathbb{S}^3$.
\end{proof}

\begin{lemma}\label{lem:nonfibred}
If $(\mathcal{O},\mathcal{F})$ is a pair as in Step \ref{ste:list pair} and $\mathcal{O}$ is non-fibred, then $\mathcal{O}$ has underlying space $\mathbb{S}^3$.
\end{lemma}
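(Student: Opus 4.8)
The plan is to go through Dunbar's list of non-fibred Euclidean $3$-orbifolds and eliminate every case whose underlying space is not $\mathbb{S}^3$, using the existence of the suborbifold $\mathcal{F}=\mathbb{S}^2(2,2,2,3)$ as the obstruction. The starting observation is that $\mathcal{F}$ is a \emph{spherical} $2$-suborbifold, so it is separating in $\mathcal{O}$: it bounds on each side, and in particular it represents the trivial class in $H_2$ of the underlying space and cuts $\mathcal{O}$ into two pieces each of which is a compact orbifold with boundary $\mathcal{F}$. Moreover the singular locus of $\mathcal{O}$ is a trivalent graph containing edges of index $2$ and (at least) one edge of index $3$, and these edges must meet $\mathcal{F}$ transversally in its four singular points: three punctures of index $2$ lie on index-$2$ edges and one puncture of index $3$ lies on an index-$3$ edge. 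So the index-$3$ part of the singular graph is nonempty, and the index-$2$ part must be able to hit $\mathcal{F}$ in exactly three points.

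The key combinatorial constraints I would extract are: (i) $\mathcal{O}$ must have a nonempty index-$3$ singular edge, which already deletes most of Dunbar's non-fibred orbifolds whose singular graph uses only indices $2$ (and $4$); (ii) the index-$2$ subgraph of the singular set must intersect the separating sphere $\mathcal{F}$ in an \emph{odd} number of points (namely three), and since each such edge is either a closed circle or runs between trivalent vertices, a parity/homology argument inside the underlying closed $3$-manifold forces the mod-$2$ intersection number of $\mathcal{F}$ with each circle component to be consistent; (iii) $\mathcal{F}$ separating means the underlying space cannot be something like $\mathbb{T}^3$ or an $S^2$-bundle where no embedded sphere of that type can separate in the required way. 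The main work is a bookkeeping pass: for each non-fibred entry in \cite{Du}, read off the underlying space and the singular graph, and check whether a sphere meeting the graph in the pattern $(2,2,2;3)$ can be embedded separatingly; in every case with underlying space $\neq \mathbb{S}^3$ (the flat manifolds themselves with empty singular set, which are excluded since $\mathcal{F}$ needs singular points, and the handful of non-fibred orbifolds over other lens spaces or over $S^1$-bundles) one of constraints (i)–(iii) fails.

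Concretely I would first dispose of the orbifolds with empty or index-$2$-only singular set by constraint (i). Then, among the remaining non-fibred orbifolds that do carry an index-$3$ edge, I would use the classification data in \cite{Du} to see that the underlying space is $\mathbb{S}^3$ in all of them except possibly a short explicit list; for that list I would invoke the separating property of $\mathcal{F}$ together with the structure of $H_1$ of the underlying $3$-manifold (for instance, in a lens space $L(p,q)$ an embedded $2$-sphere is automatically null-homologous, which is fine, but then one tracks whether the complementary pieces can be orbifolds with the right boundary and this fails for parity reasons coming from how the index-$2$ circles wind). The cleanest uniform argument, which I would try to make the backbone of the proof, is: an index-$2$ circle in the singular graph, together with $\mathcal{F}$, must have algebraic intersection number $0$ in the underlying manifold (since $[\mathcal{F}]=0$ in $H_2$), yet to account for the three index-$2$ punctures of $\mathcal{F}$ some index-$2$ circle must meet $\mathcal{F}$ in an odd number of points, which is impossible unless the relevant circle is actually an arc of the singular graph terminating at trivalent vertices — and the combinatorics of trivalent Euclidean orbifold graphs with an index-$3$ edge, as listed by Dunbar, forces the underlying space to be $\mathbb{S}^3$.

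The hard part will be making step (iii)/the parity argument fully rigorous and case-free, because Dunbar's non-fibred list is given somewhat tersely via pictures of singular graphs in $\mathbb{S}^3$ and names of fundamental groups; a few non-fibred orbifolds have underlying spaces other than $\mathbb{S}^3$ and their singular graphs are drawn in those spaces, so I cannot simply read intersection numbers off a planar picture. I expect to need, for those few cases, an ad hoc verification that no $\mathbb{S}^2(2,2,2,3)$ suborbifold exists — likely by lifting to a manifold cover or by a direct homological computation in the relevant lens space or prism manifold. Once those exceptional cases are cleared, the statement follows: every non-fibred $\mathcal{O}$ admitting such an $\mathcal{F}$ has underlying space $\mathbb{S}^3$.
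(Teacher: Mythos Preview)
Your overall strategy---consult Dunbar's list and rule out the non-$\mathbb{S}^3$ cases using the separating suborbifold $\mathcal{F}$ and an intersection-parity argument---is exactly what the paper does. But you have dramatically overestimated the work: by Dunbar's classification there is precisely \emph{one} non-fibred Euclidean $3$-orbifold whose underlying space is not $\mathbb{S}^3$, and its underlying space is $\mathbb{RP}^3$. There are no flat manifolds with empty singular set on the non-fibred list (closed flat $3$-manifolds are all Seifert fibred), and no lens spaces, prism manifolds, or $S^1$-bundles appear as underlying spaces here. So your constraints (i)--(iii) and the lengthy case analysis collapse to a single line.

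Moreover, the clean obstruction is on the index-$3$ side, not the index-$2$ side you emphasize. In the unique $\mathbb{RP}^3$ exception, the index-$3$ part of the singular set is a closed circle. Since every embedded $2$-sphere in $\mathbb{RP}^3$ separates, $\mathcal{F}$ is separating in $\mathcal{O}$, and hence meets that circle an even number of times; but $\mathcal{F}\cong\mathbb{S}^2(2,2,2,3)$ has exactly one index-$3$ point, forcing a single transverse intersection---a contradiction. That is the entire proof. Your index-$2$ parity idea is not wrong in spirit, but it is more delicate (the index-$2$ locus need not consist of closed circles once trivalent vertices are present), whereas the index-$3$ locus in the sole exceptional case \emph{is} a circle, giving an immediate parity contradiction.
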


\begin{proof}
By the classification result in \cite{Du}, the only non-fibred Euclidean 3-orbifold with underlying space not homeomorphic to $\mathbb{S}^3$ has underlying space $\mathbb{RP}^3$, and its singular set of index $3$ consists of a circle. Then $\mathcal{F}$ separates $\mathcal{O}$ and cannot intersect the circle only once.
\end{proof}

\begin{lemma}\label{lem:handlebody}
If $(\mathcal{O},\mathcal{F})$ is a pair as in Step \ref{ste:list pair}, then $\mathcal{F}$ bounds a handlebody orbifold which is a regular neighborhood of an edge of the singular set, with boundary $\mathbb{S}^2(2,2,2,3)$.
\end{lemma}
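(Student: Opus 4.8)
The plan is to show that $\mathcal{F}$, a copy of $\mathbb{S}^2(2,2,2,3)$ inside the Euclidean $3$-orbifold $\mathcal{O}$, must cut off a particularly simple piece. By Lemmas \ref{lem:fibred} and \ref{lem:nonfibred}, the underlying space of $\mathcal{O}$ is $\mathbb{S}^3$, so $\mathcal{F}$ has underlying sphere $\mathbb{S}^2$ sitting in $\mathbb{S}^3$ and hence separates $\mathcal{O}$ into two pieces $\mathcal{O}_1$ and $\mathcal{O}_2$, each with boundary $\mathbb{S}^2(2,2,2,3)$. The four singular points of $\mathcal{F}$ of indices $2,2,2,3$ are points where $\mathcal{F}$ meets the singular set $\Gamma$ of $\mathcal{O}$ transversally; since $\Gamma$ is a trivalent graph whose edges carry the indices $2$ or $3$, near each intersection point $\mathcal{F}$ crosses a single edge of $\Gamma$. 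First I would lift everything to the preimage $p_0^{-1}$ in $\mathbb{T}^3$ (or argue directly with the orbifold Euler characteristic): the key numerical input is that $\chi^{\mathrm{orb}}(\mathbb{S}^2(2,2,2,3)) = 2 - (1-\tfrac12) - (1-\tfrac12) - (1-\tfrac12) - (1-\tfrac13) = -\tfrac{1}{6} < 0$, so $\mathcal{F}$ is a hyperbolic $2$-orbifold, the quotient of $\Sigma_g$ by a group of order $12(g-1)$.

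Next I would identify which of $\mathcal{O}_1,\mathcal{O}_2$ is a handlebody orbifold. The orbifold fundamental group $\pi_1^{\mathrm{orb}}(\mathcal{O})$ is a Bieberbach-type group (crystallographic, since $\mathcal{O}$ is Euclidean with $\mathbb{T}^3$ cover), in particular torsion modulo the translation lattice; a regular neighborhood of an edge $e$ of $\Gamma$ of index $n$ is a solid-torus orbifold $D^2(n)\times I$ or, when the edge runs between two trivalent vertices, a handlebody orbifold whose boundary is exactly a sphere with some cone points. The claim is that one side, say $\mathcal{O}_1$, is such a regular neighborhood of a single edge. To see this I would examine the concrete list: by the discussion in Sections 4 and 5 of \cite{Du}, for each relevant $\mathcal{O}$ (fibred with base $\mathbb{D}^2(-;2,3,6)$, or non-fibred with underlying space $\mathbb{S}^3$) the singular graph $\Gamma \subset \mathbb{S}^3$ is explicitly drawn, and one checks that the only separating sphere meeting $\Gamma$ in the index pattern $(2,2,2,3)$ is (isotopic to) the boundary of a regular neighborhood of the unique edge of $\Gamma$ that is incident to exactly those four edge-ends — equivalently, an edge labelled $3$ whose two endpoints are trivalent vertices each of the form $(2,2,3)$, together with the three index-$2$ edges emanating there. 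This local piece is an orbifold handlebody of underlying genus determined by the vertices, with $\partial = \mathbb{S}^2(2,2,2,3)$.

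The main obstacle I anticipate is the case analysis ruling out the possibility that \emph{both} complementary pieces are essential (i.e. that neither side is a handlebody orbifold). Here I would use two constraints in tandem: first, that a sphere in $\mathbb{S}^3$ always bounds on both sides, so at least one side $\mathcal{O}_i$ has underlying space a ball; second, a Euclidean $3$-orbifold cannot contain an incompressible hyperbolic suborbifold that is \emph{not} boundary-parallel into a neighborhood of $\Gamma$ — otherwise one could build an incompressible surface in $\mathbb{T}^3$, contradicting that $\pi_1(\mathbb{T}^3) = \mathbb{Z}^3$ contains no surface subgroup of genus $>1$ as a retract, or more directly contradicting the equivariant loop theorem statement used in \cite{BRWW} that forces the surface to compress. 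Combining: the ball side $\mathcal{O}_i$ contains a subgraph of $\Gamma$ which, since $\chi^{\mathrm{orb}}(\partial \mathcal{O}_i) < 0$ is small and the vertices are trivalent with labels forced by $\{2,3\}$, must consist of a single edge and its endpoints; this pins down $\mathcal{O}_i$ as the asserted handlebody orbifold, which also shows $\mathcal{F}$ bounds it. I would organize the write-up so that the generic argument (sphere separates, one side is a ball, Euler-characteristic bookkeeping forces a single edge) does most of the work, leaving only a short appeal to Dunbar's pictures to confirm no exotic configuration survives.
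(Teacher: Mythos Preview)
Your approach diverges from the paper's and, as written, has a genuine gap at the step you yourself flag as the main obstacle.

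The paper's argument is local and constructive: apply the equivariant loop theorem to obtain a compressing disk $\mathbb{D}^2(d)$ for $\mathcal{F}$; an essential compression of $\mathbb{S}^2(2,2,2,3)$ must separate two cone points from the other two, yielding $\mathbb{S}^2(2,2,d)$ and $\mathbb{S}^2(2,3,d)$ with $d\ge 2$. For $d\ge 7$ the second piece is hyperbolic and hence incompressible, contradicting the loop theorem; for $2\le d\le 5$ both pieces are spherical and, by irreducibility of $\mathbb{T}^3$, bound discal $3$-orbifolds whose union is the desired handlebody orbifold; the borderline case $d=6$ is disposed of by a short appeal to Lemma~\ref{lem:fibred} and the single orbifold in Dunbar's list carrying an index-$6$ edge. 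The compression disk is precisely what lets one \emph{build} the handlebody piece out of discal pieces, rather than having to recognise it a posteriori.

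Your route tries instead to recognise one complementary piece directly as a regular neighbourhood of an edge. The difficulty is that since the underlying space of $\mathcal{O}$ is $\mathbb{S}^3$, \emph{both} sides $\mathcal{O}_1,\mathcal{O}_2$ have underlying space a ball, so ``the ball side'' does not single one out; and the Euler-characteristic bookkeeping you invoke constrains only the four intersection points of $\Gamma$ with $\partial\mathcal{O}_i$, not the internal complexity of $\Gamma\cap\mathcal{O}_i$. In the knotted cases ($\beta,\gamma$ in Figure~\ref{fig:pairs}) the complement of the regular neighbourhood of the edge is again a ball but contains a graph with several internal vertices and edges, so there is no numerical obstruction of the type you sketch that forces ``a single edge and its endpoints''. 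Your fallback of checking Dunbar's pictures would in principle succeed, but then the intermediate arguments carry no weight and you are effectively proving Proposition~\ref{pro:pairs} by exhaustion in order to establish this lemma, inverting the paper's logic. The missing idea is to exploit compressibility \emph{constructively} via the compressing disk, which immediately reduces to spherical $2$-orbifolds bounding discal $3$-orbifolds.
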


\begin{proof}
As in \cite{BRWW}, the equivariant loop theorem \cite{MY} gives a compression disk $\mathbb{D}^2(d)$ of $\mathcal{F}$. Since $\mathcal{F}$ is isomorphic to $\mathbb{S}^2(2,2,2,3)$, the compression disk splits $\mathcal{F}$ into two orbifolds $\mathbb{S}^2(2,2,d)$ and $\mathbb{S}^2(2,3,d)$. Then $d\geq 2$.

If $d\geq 7$, then $\mathbb{S}^2(2,3,d)$ has negative Euler characteristic and is incompressible, which contradicts the equivariant loop theorem. If $d\leq 5$, then both $\mathbb{S}^2(2,2,d)$ and $\mathbb{S}^2(2,3,d)$ are spherical and bound discal 3-orbifolds (as in \cite{WWZZ2}), because $\mathbb{T}^3$ is irreducible. Since each one of $\mathbb{S}^2(2,2,d)$ and $\mathbb{S}^2(2,3,d)$ cannot lie in the discal 3-orbifold bounded by the other one, the union of the two discal 3-orbifolds is the handlebody orbifold bounded by $\mathcal{F}$.

If $d=6$, then by the classification result in \cite{Du} the orbifold $\mathcal{O}$ is fibred. Then by Lemma \ref{lem:fibred} the base 2-orbifold of $\mathcal{O}$ is $\mathbb{D}^2(-;2,3,6)$. There is only one such $\mathcal{O}$ having singular points of index $6$. Its singular set is pictured as $[\text{P}622]$ in Figure \ref{fig:pairs}, where $\mathcal{F}$ can only be the boundary of a regular neighborhood of the edge $\beta$, up to isomorphism between the pairs $(\mathcal{O},\mathcal{F})$.
\end{proof}

\begin{proposition}\label{pro:pairs}
Up to isomorphism between orbifold pairs, all orbifold pairs $(\mathcal{O},\mathcal{F})$ in Step \ref{ste:list pair} are obtained as follows. The underlying topological space of the orbifold $\mathcal{O}$ is $\mathbb{S}^3$, and its singular set is given by one of the six pictures in Figure \ref{fig:pairs}. The 2-suborbifold $\mathcal{F}$, of type $\mathbb{S}^2(2,2,2,3)$, is obtained as the boundary of a regular neighborhood of one of the nine marked singular edges $\alpha$, $\beta$ or $\gamma$.

\begin{figure}[h]
\includegraphics{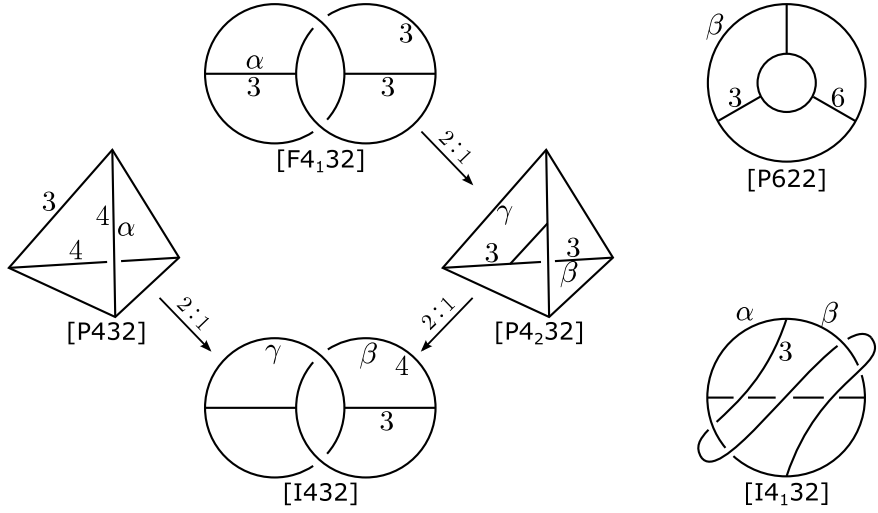}
\caption{Edges without number have index $2$. Names denote the corresponding space groups. Arrows indicate $2$-fold coverings.}\label{fig:pairs}
\end{figure}
\end{proposition}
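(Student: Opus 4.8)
The plan is to combine the three preceding lemmas with a direct inspection of Dunbar's list \cite{Du}. By Lemmas \ref{lem:fibred} and \ref{lem:nonfibred} the underlying space of $\mathcal{O}$ is $\mathbb{S}^3$ in every case, so the candidates for $\mathcal{O}$ are exactly the Euclidean $3$-orbifolds whose singular set is a trivalent graph in $\mathbb{S}^3$; these are tabulated in \cite{Du} together with pictures of the graphs and the names of the space groups $\pi_1(\mathcal{O})$. By Lemma \ref{lem:handlebody}, $\mathcal{F}$ is the boundary of a regular neighborhood $N(e)$ of a single edge $e$ of this graph, with $\partial N(e)\cong\mathbb{S}^2(2,2,2,3)$. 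So the task becomes: for each orbifold in the list, decide which edges $e$ have a regular neighborhood with that boundary, and then identify the resulting pairs $(\mathcal{O},\mathcal{F})$ up to isomorphism.

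First I would fix the local picture around such an edge $e$. Let $d$ be the index of $e$. Its two endpoints are trivalent vertices, and the four edges meeting those vertices other than $e$ itself puncture $\partial N(e)$, so the multiset of their indices equals $\{2,2,2,3\}$. Hence one endpoint of $e$ has vertex type $(d,2,2)$ and the other has vertex type $(d,2,3)$. Since the local group at a trivalent vertex of a $3$-orbifold is a finite subgroup of $SO(3)$, the type $(d,2,3)$ is realized only for $d=2$ (dihedral), $d=3$ ($A_4$), $d=4$ ($S_4$) or $d=5$ ($A_5$), while the type $(d,2,2)$ is dihedral for every such $d$; moreover the proof of Lemma \ref{lem:handlebody} already rules out $d\ge 7$ and shows that $d=6$ occurs only for the fibred orbifold $[\text{P}622]$ with $e=\beta$. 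Conversely, any edge of a Euclidean orbifold whose endpoints have types $(d,2,2)$ and $(d,2,3)$ has a regular neighborhood that is a handlebody orbifold with boundary $\mathbb{S}^2(2,2,2,3)$, so the search is purely combinatorial: it suffices to scan the singular graphs in \cite{Du} for edges with this endpoint pattern.

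It then remains to carry out this scan. For the fibred orbifolds, Lemma \ref{lem:fibred} restricts us to base $\mathbb{D}^2(-;2,3,6)$, a short sublist of \cite{Du}; for the non-fibred ones, I would run through Dunbar's pictures one by one. In each singular graph I would mark every edge whose two endpoints realize the pattern $(d,2,2)$/$(d,2,3)$ for some $d\le 6$, and then cut down to isomorphism classes by quotienting by the automorphism group of the singular graph, which is the group of orbifold isomorphisms and also governs when two marked edges give isomorphic pairs $(\mathcal{O},\mathcal{F})$. This produces the six orbifolds and the nine marked edges $\alpha,\beta,\gamma$ of Figure \ref{fig:pairs}; the $2$-fold coverings indicated by the arrows are then read off from the inclusions of index $2$ between the corresponding space groups. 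The main obstacle is not a single deep argument but the completeness of the bookkeeping: one must be certain that Dunbar's list has been traversed exhaustively, that every admissible edge has been found, and that every coincidence of pairs under orbifold isomorphism has been accounted for.
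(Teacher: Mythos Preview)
Your proposal is correct and follows essentially the same route as the paper: reduce to underlying space $\mathbb{S}^3$ via Lemmas \ref{lem:fibred} and \ref{lem:nonfibred}, reduce $\mathcal{F}$ to the boundary of a regular neighborhood of a singular edge via Lemma \ref{lem:handlebody}, and then enumerate edges in Dunbar's list. The paper's own proof is a terse three sentences; your version makes explicit the combinatorial criterion (endpoint types $(d,2,2)$ and $(d,2,3)$) and the role of the automorphism group of the singular graph, which the paper leaves implicit in the phrase ``enumerating the possible singular edges''. One minor remark: in your paragraph on the local picture, the symbol $d$ drifts between the index of the compression disk in Lemma \ref{lem:handlebody} and the index of the edge $e$; once you know $\mathcal{F}=\partial N(e)$, the vertex-group constraint already forces the index of $e$ to lie in $\{2,3,4,5\}$, so the separate appeal to the $d=6$ clause of Lemma \ref{lem:handlebody} is not needed at that point.
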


\begin{proof}
By Lemma \ref{lem:fibred} and \ref{lem:nonfibred}, $\mathcal{O}$ has underlying space $\mathbb{S}^3$. Hence its singular set belongs to the list of pictures in \cite{Du}. By Lemma \ref{lem:handlebody}, $\mathcal{F}$ is the boundary of a regular neighborhood of a singular edge in the singular set of $\mathcal{O}$. Hence all the possible pairs $(\mathcal{O},\mathcal{F})$ can be found by enumerating the possible singular edges, which are exactly the marked edges in Figure \ref{fig:pairs}.
\end{proof}

\begin{remark}
Consider the complement of a regular neighborhood of a marked singular edge in Figure \ref{fig:pairs}. It is a handlebody orbifold if and only if the singular edge has mark $\alpha$. Hence only the singular edges $\alpha$ correspond to unknotted surfaces; the edges $\beta$ and $\gamma$ correspond to knotted ones.
\end{remark}

\section{Find the coverings $q$}\label{sec:find coverings}
For a given pair $(\mathcal{O},\mathcal{F})$ in Step \ref{ste:list pair}, we will first list the finite index normal translation subgroups of $\pi_1(\mathcal{O})$, then we will use a lemma in \cite{WWZZ2} to verify the connectedness.

For each $\mathcal{O}$ in Figure \ref{fig:pairs}, the representation of $\pi_1(\mathcal{O})$ as a space group can be found in \cite{Ha}. In the present paper we will use a slightly different representation of $\pi_1(\mathcal{O})$. First we need to introduce some notation (following \cite{BRWW}).

\begin{definition}\label{def:isometry}
Any element $t=(a, b, c)\in \mathbb{R}^3$ can act on $\mathbb{R}^3$ as the translation:
\[t: (x, y, z)\mapsto(x+a, y+b, z+c).\]
Let $t_x,t_y,t_z,t_{1/2},t_\omega$ be the following elements in $\mathbb{R}^3$ respectively:
\[(1,0,0),\quad (0,1,0),\quad (0,0,1),\quad (\frac{1}{2},\frac{1}{2},\frac{1}{2}),\quad (-\frac{1}{2},\frac{\sqrt{3}}{2},0).\]
For $n\in \mathbb{Z}_+$, let $T_{n^3/2},T_{n^3},T_{2n^3},T^\omega_{n^2},T^\omega_{3n^2}$ be the following subgroups of $\mathbb{R}^3$:
\begin{align*}
T_{n^3/2}&=\langle nt_x,nt_y,nt_{1/2}\rangle,\quad T_{n^3}=\langle nt_x,nt_y,nt_z\rangle,\\
T_{2n^3}&=\langle 2nt_x,nt_y+nt_x,nt_z+nt_x\rangle,\\
T^\omega_{n^2}&=\langle nt_\omega,nt_x,t_z\rangle,\quad T^\omega_{3n^2}=\langle 2nt_\omega+nt_x,nt_\omega+2nt_x,t_z\rangle.
\end{align*}
Let $r_y,r_z,r_{xy},r_{xyz},r_\omega$ be the following isometries of $\mathbb{R}^3$:
\begin{align*}
r_y: (x,y,z)&\mapsto(-x,y,-z),\quad r_z: (x,y,z)\mapsto(-x,-y,z),\\
r_{xy}: (x,y,z)&\mapsto(y,x,-z),\quad r_{xyz}: (x,y,z)\mapsto(z,x,y),\\
r_\omega: (x,y,z)&\mapsto(-\frac{1}{2}x-\frac{\sqrt{3}}{2}y,\frac{\sqrt{3}}{2}x-\frac{1}{2}y,z).
\end{align*}
\end{definition}

Note that when $m$ is one of $n^3/2$, $n^3$, $2n^3$, then $\mathbb{R}^3/T_m$ is homeomorphic to $\mathbb{T}^3$ with volume $m$, and when $m$ is one of $n^2$, $3n^2$, then $\mathbb{R}^3/T^\omega_m$ is homeomorphic to $\mathbb{T}^3$ with volume $\sqrt{3}m/2$. Moreover, we have
\[T_{n^3/2}\supset T_{n^3}\supset T_{4n^3},\quad T_{n^3}\supset T_{2n^3} \supset T_{(2n)^3},\quad T^\omega_{n^2}\supset T_{3n^2}\supset T_{(3n)^2}.\]
The isometries $r_y$, $r_z$ and $r_{xy}$ are $\pi$-rotations about the directions $(0,1,0)$, $(0,0,1)$ and $(1,1,0)$ respectively. The isometries $r_{xyz}$ and $r_\omega$ are right-hand $2\pi/3$-rotations about the directions $(1,1,1)$ and $(0,0,1)$ respectively.

\begin{lemma}\label{lem:rep.}
The universal covering groups of the 3-orbifolds in Proposition \ref{pro:pairs} are generated by the following elements, starting with the translation groups (whose indices in the whole groups are always $24$ except in the last case where the index is $12$):
\begin{itemize}
\item $[\text{P}432]$ : $\langle T_1,r_y,r_z,r_{xy},r_{xyz}\rangle$
\item $[\text{F}4_132]$ : $\langle T_2,r_y,r_z,t_{1/2}r_{xy},r_{xyz}\rangle$
\item $[\text{I}4_132]$ : $\langle T_4,t_zt_yr_y,t_xt_zr_z,t_xt_{1/2}r_{xy},r_{xyz}\rangle$
\item $[\text{I}432]$ : $\langle T_{1/2},r_y,r_z,r_{xy},r_{xyz}\rangle$
\item $[\text{P}4_232]$ : $\langle T_1,r_y,r_z,t_{1/2}r_{xy},r_{xyz}\rangle$
\item $[\text{P}622]$ : $\langle T^\omega_1,r_y,r_z,r_\omega\rangle$
\end{itemize}
\end{lemma}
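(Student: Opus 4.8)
The plan is to verify directly that each listed generating set produces the Euclidean space group whose international (Hermann–Mauguin) symbol appears on the left, by matching it against the presentations catalogued in \cite{Ha} and against the singular set pictures of \cite{Du} recorded in Figure \ref{fig:pairs}. Concretely, for each of the six orbifolds I would first check that the listed translation lattice is normalized by the listed rotations: one computes $r\,t\,r^{-1}$ for each rotation generator $r$ and each lattice generator $t$, using the explicit formulas in Definition \ref{def:isometry}, and confirms the result lies in the stated $T_m$ or $T^\omega_m$. For instance, $r_{xyz}$ cyclically permutes the coordinate axes, so it permutes $\{t_x,t_y,t_z\}$ and fixes $T_1$; $r_y$ and $r_z$ send each $t_\bullet$ to $\pm t_\bullet$, hence also preserve $T_1$; and one checks $r_\omega$ preserves the hexagonal lattice $T^\omega_1=\langle t_\omega,t_x,t_z\rangle$ because $r_\omega$ is the order-$3$ rotation fixing the hexagonal plane. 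In the cases with a fractional translation attached to a rotation (the $t_{1/2}r_{xy}$ in $[\text{F}4_132]$, $[\text{P}4_232]$, and the decorated generators of $[\text{I}4_132]$), one additionally checks that the coset representatives close up to give the correct point group extension, i.e. that the squares and products of these decorated rotations land back in the translation lattice; this is exactly what distinguishes, e.g., $\text{P}4_232$ from $\text{P}432$ and $\text{F}4_132$ from $\text{Fd}\bar{3}\text{m}$-type groups.

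Next I would pin down the point group and the index. The rotations $r_y,r_z,r_{xy},r_{xyz}$ generate the orientation-preserving octahedral group of order $24$ (rotations of the cube), which is why the first five entries have translation-subgroup index $24$; the rotations $r_y,r_z,r_\omega$ generate the orientation-preserving dihedral group $D_6$ of order $12$, giving the index $12$ in the $[\text{P}622]$ case. To see the index is exactly this and not larger, note each decorated generator $t_\ast r$ lies in the coset of the lattice determined by $r$, so the quotient by the lattice is a quotient of the relevant point group; and it is not smaller because all the rotations $r$ already act nontrivially and distinctly modulo the lattice. Finally I would confirm that each group so presented acts on $\mathbb{R}^3$ with quotient orbifold having underlying space $\mathbb{S}^3$ and singular set the corresponding picture in Figure \ref{fig:pairs}: the axes of the rotations $r_y,r_z,r_{xy},r_{xyz}$ (resp.\ $r_y,r_z,r_\omega$) project to the edges of the trivalent singular graph, the indices along edges come from the orders of the corresponding rotations (order $2$ for the $r$'s of type $r_y,r_z,r_{xy}$, order $3$ for $r_{xyz},r_\omega$), and the axes of $4$-fold rotations (present as products like $r_{xy}r_z$) account for the index-$4$ edges; comparing with Dunbar's table identifies the orbifold uniquely with the named one. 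The two-fold covering arrows in Figure \ref{fig:pairs} are then realized by the lattice inclusions recorded just before the lemma, e.g.\ $T_1\supset T_2$ gives the covering $[\text{F}4_132]\to[\text{P}432]$ after checking the rotation generators match up under the inclusion.

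The main obstacle, I expect, is the bookkeeping in the cases with fractional translations — especially $[\text{I}4_132]$, where three of the four non-lattice generators carry translational parts $t_zt_y$, $t_xt_z$, $t_xt_{1/2}$. One must verify not just that the lattice $T_4$ is normalized, but that these particular translational parts are the right coset representatives to produce $\text{I}4_132$ (the body-centred lattice forces specific screw components, and choosing them wrong yields a non-isomorphic group or an inconsistent extension). Concretely the check is that the relations of the abstract space group — the commutation relations among the rotations twisted by their translational parts, and the orders of the screw elements modulo $T_4$ — are all satisfied; this is a finite but somewhat delicate computation best organized by reducing everything modulo $T_4$ and working in the finite group $(\mathbb{R}^3/T_4)\rtimes(\text{octahedral})$. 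The other five cases are comparatively routine: $[\text{P}432]$ and $[\text{I}432]$ have no fractional translations at all (only the lattice changes, $T_1$ versus $T_{1/2}$), and $[\text{F}4_132]$, $[\text{P}4_232]$, $[\text{P}622]$ involve only the single decorated generator $t_{1/2}r_{xy}$ or none, so the coset arithmetic there is short.
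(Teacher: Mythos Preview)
The paper does not actually prove this lemma: it is stated without a proof environment, and the justification is the sentence immediately preceding it, ``the representation of $\pi_1(\mathcal{O})$ as a space group can be found in \cite{Ha}. In the present paper we will use a slightly different representation of $\pi_1(\mathcal{O})$.'' In other words, the authors regard these generating sets as a repackaging of standard data from the International Tables for Crystallography, and leave the verification to the reader. (Section~\ref{sec:example} later carries out part of this verification explicitly for $[\text{I}4_132]$, by building the fundamental domain from the listed generators and recovering Dunbar's singular-set picture.)

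Your plan is therefore not really a different approach so much as a fleshing-out of what the paper leaves implicit: check that the rotations normalize the stated lattice (this is Lemma~\ref{lem:conjugate} in the paper), identify the point group and hence the index, and match the resulting quotient against \cite{Ha} and \cite{Du}. That is exactly the right thing to do, and your identification of $[\text{I}4_132]$ as the delicate case is on target---indeed, that is precisely the case the paper chooses to work through in Section~\ref{sec:example}. Two small corrections to your sketch: the product $r_{xy}r_z$ has order~$2$, not~$4$ (an order-$4$ element in the octahedral point group is, for instance, $r_{xy}r_y$); and the $2$-fold covering arrows in Figure~\ref{fig:pairs} are not all realized by simple lattice inclusions among the listed groups (e.g.\ $[\text{F}4_132]$ and $[\text{P}432]$ have different decorated generators, not just $T_2\subset T_1$), so that last remark would need more care. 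Neither point affects the soundness of your overall strategy.
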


To list the finite index normal translation subgroups of the above groups, we need the following two lemmas.

\begin{lemma}\label{lem:conjugate}
For a translation $t=(a,b,c)$ of $\mathbb{R}^3$, its conjugates under $r_y$, $r_z$, $r_{xy}$, $r_{xyz}$, $r_\omega$ are the
following translations:
\begin{align*}
r_y^{-1}tr_y&=(-a,b,-c),\quad r_z^{-1}tr_z=(-a,-b,c),\\
r_{xy}^{-1}tr_{xy}&=(b,a,-c),\quad r_{xyz}^{-1}tr_{xyz}=(b,c,a),\\
r_\omega^{-1}tr_\omega&=(-\frac{1}{2}a+\frac{\sqrt{3}}{2}b,-\frac{\sqrt{3}}{2}a-\frac{1}{2}b,c).
\end{align*}
\end{lemma}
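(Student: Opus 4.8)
The statement to prove is Lemma~\ref{lem:conjugate}, which computes the conjugates of a translation $t=(a,b,c)$ under the five isometries $r_y, r_z, r_{xy}, r_{xyz}, r_\omega$.

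\medskip

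The plan is to use the elementary fact that conjugating a translation by a linear isometry $A$ (here each $r_\bullet$ is linear, fixing the origin) yields again a translation, whose vector is $A$ applied to the original vector. Concretely, if $A\in O(3)$ acts by $v\mapsto Av$ and $t$ is translation by the vector $v=(a,b,c)$, then for any $x\in\mathbb{R}^3$ one has $(A^{-1}tA)(x)=A^{-1}(A x+v)=x+A^{-1}v$, so $A^{-1}tA$ is translation by $A^{-1}v$. Thus each line of the claimed formula is just the evaluation of $r_\bullet^{-1}$ on $(a,b,c)$, and since each $r_\bullet$ is an orthogonal involution or an order-three rotation with an easily inverted matrix, this reduces to reading off the matrices from Definition~\ref{def:isometry}.

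\medskip

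First I would record the matrix of each generator from Definition~\ref{def:isometry}: $r_y=\mathrm{diag}(-1,1,-1)$ and $r_z=\mathrm{diag}(-1,-1,1)$ are diagonal involutions, so $r_y^{-1}=r_y$ and $r_z^{-1}=r_z$, giving $(a,b,c)\mapsto(-a,b,-c)$ and $(a,b,c)\mapsto(-a,-b,c)$ directly. Next $r_{xy}\colon(x,y,z)\mapsto(y,x,-z)$ is also an involution (swap the first two coordinates, negate the third), so $r_{xy}^{-1}=r_{xy}$ and the conjugate is $(b,a,-c)$. For $r_{xyz}\colon(x,y,z)\mapsto(z,x,y)$, this is the cyclic permutation sending $e_1\mapsto e_2\mapsto e_3\mapsto e_1$; its inverse sends $e_1\mapsto e_3\mapsto e_2\mapsto e_1$, i.e. $(x,y,z)\mapsto(y,z,x)$, so $r_{xyz}^{-1}t\,r_{xyz}$ is translation by $(b,c,a)$. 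Finally, $r_\omega$ is the rotation by $2\pi/3$ about the $z$-axis with matrix $\begin{pmatrix}-1/2 & -\sqrt3/2 & 0\\ \sqrt3/2 & -1/2 & 0\\ 0&0&1\end{pmatrix}$; its inverse is the rotation by $-2\pi/3$, with matrix $\begin{pmatrix}-1/2 & \sqrt3/2 & 0\\ -\sqrt3/2 & -1/2 & 0\\ 0&0&1\end{pmatrix}$, and applying this to $(a,b,c)$ gives $(-\tfrac12 a+\tfrac{\sqrt3}{2}b,\,-\tfrac{\sqrt3}{2}a-\tfrac12 b,\,c)$, exactly as claimed.

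\medskip

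There is no real obstacle here: the only point requiring a moment's care is getting the \emph{inverse} rotations right rather than the rotations themselves (the lemma is stated in the form $r^{-1}tr$, so the relevant matrix is $r^{-1}$, not $r$), and checking a sign convention on $r_\omega$ by verifying that $r_\omega^3$ is the identity on the translation lattice. Everything else is direct matrix–vector multiplication, and one could even phrase the whole proof in one sentence: ``Since each $r_\bullet$ is linear, $r_\bullet^{-1}t\,r_\bullet$ is the translation by $r_\bullet^{-1}(a,b,c)$, and the five formulas are obtained by applying the inverse of the corresponding matrix in Definition~\ref{def:isometry}.''
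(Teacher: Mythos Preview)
Your proposal is correct; the computation is exactly the elementary one you describe, and each of the five formulas checks out. The paper itself gives no proof of this lemma at all, treating it as immediate from Definition~\ref{def:isometry}, so your write-up is in fact more detailed than what the authors deemed necessary.
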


\begin{lemma}\label{lem:invariant}
Let $T$ be a discrete group consisting of translations of $\mathbb{R}^3$.

(1) If $T$ is invariant under the conjugation of $r_y,r_z,r_{xyz}$, then there is $u\geq 0$ such that $T$ is one of the three groups:
\[\langle ut_x,ut_y,ut_z\rangle,\quad \langle 2ut_x,ut_y+ut_x,ut_z+ut_x\rangle,\quad \langle 2ut_x,2ut_y,2ut_{1/2}\rangle.\]

(2) If $T$ is invariant under the conjugation of $r_y,r_z,r_\omega$, then there are $u,v\geq 0$ such that $T$ is one of the two groups:
\[\langle ut_\omega,ut_x,vt_z\rangle,\quad \langle 2ut_\omega+ut_x,ut_\omega+2ut_x,vt_z\rangle.\]
\end{lemma}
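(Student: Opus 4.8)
The plan is to exploit the lattice structure of $T$ together with the explicit conjugation formulas of Lemma \ref{lem:conjugate}. For part (1), note first that $T$ is a full-rank (or possibly lower-rank) lattice in $\mathbb{R}^3$, and invariance under $r_y$, $r_z$ forces $T$ to be closed under each of the sign changes $(a,b,c)\mapsto(-a,b,-c)$ and $(a,b,c)\mapsto(-a,-b,c)$, hence under all even sign changes; composing these with invariance under the cyclic permutation $r_{xyz}$, we get that $T$ is invariant under the whole orientation-preserving cubic symmetry group acting on coordinates. The strategy is then: let $u\ge 0$ be the smallest positive value of $|a|$ occurring among the nonzero triples $(a,b,c)\in T$ with $b=c=0$ (if no such nonzero triple exists, a short argument using the permutation symmetry and the sign changes shows $T$ must still be generated by a vector all of whose coordinates are nonzero — this is exactly how the ``body-centered'' and ``face-centered'' cases $\langle 2ut_x,ut_y+ut_x,ut_z+ut_x\rangle$ and $\langle 2ut_x,2ut_y,2ut_{1/2}\rangle$ arise). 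Using the permutation symmetry, the vectors $ut_x,ut_y,ut_z$ all lie in $T$, so $T\supseteq\langle ut_x,ut_y,ut_z\rangle$, and then one classifies the possible ``extra'' cosets: any $(a,b,c)\in T$ has each coordinate in $\tfrac{1}{2}u\mathbb{Z}$ (reduce mod $u\mathbb{Z}$ and use minimality of $u$ after symmetrizing), and checking which half-integer coset vectors are simultaneously compatible with all the symmetries yields precisely the three listed lattices.

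For part (2), the argument is the planar analogue in the $xy$-plane together with a decoupled $z$-direction. Conjugation by $r_\omega$ and by $r_z$ (which on the $xy$-plane is $-\mathrm{id}$) shows that the projection of $T$ to the $xy$-plane is invariant under the order-$6$ dihedral group generated by the $2\pi/3$-rotation $r_\omega$ and a reflection/negation; such a lattice in $\mathbb{R}^2$ is, up to scale, the hexagonal lattice $\langle ut_\omega, ut_x\rangle$ or its index-$3$ sublattice $\langle 2ut_\omega+ut_x, ut_\omega+2ut_x\rangle$ (the ``$\sqrt 3$-rescaled'' hexagonal sublattice), giving the two stated possibilities. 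Meanwhile $r_y:(x,y,z)\mapsto(-x,y,-z)$ negates the $z$-coordinate, so the intersection $T\cap(\{0\}\times\{0\}\times\mathbb{R})$ is $v\mathbb{Z}$ for some $v\ge 0$, and one checks that the $z$-component cannot be coupled to the planar part: any element $(a,b,c)\in T$ with $(a,b)\ne(0,0)$ can, by applying $r_\omega$ and averaging in the rotation orbit, be used to produce a purely vertical element, forcing $c\in v\mathbb{Z}$, so $T$ splits as a direct sum of its planar projection and $v t_z\mathbb{Z}$.

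The key tool throughout is that a discrete subgroup of translations invariant under a finite group of linear maps decomposes according to the invariant subspaces of that linear group, and within each isotypic piece the invariant lattices form a short explicit list. The main obstacle I expect is the bookkeeping in part (1): verifying that the half-integer coset vectors compatible with both the sign-change symmetries \emph{and} the cyclic permutation $r_{xyz}$ are exactly those generating the body-centered and face-centered cubic lattices, and no others (for instance ruling out configurations that would give a larger group than the three listed, or showing any such would already coincide with one of them after rescaling $u$). This is a finite check over cosets of $u\mathbb{Z}^3$ inside $\tfrac{1}{2}u\mathbb{Z}^3$, but it must be done carefully to be sure the list is complete; everything else reduces to the standard classification of rank-$2$ lattices invariant under $D_6$.
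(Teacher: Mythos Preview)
Your approach is correct but genuinely different from the paper's, especially in part (1). The paper argues geometrically: it picks a nonzero element $(a,b,c)\in T$ of \emph{minimal Euclidean length}, uses the conjugation formulas to produce the element $(-b,-c,2c-a)$, and from the minimality inequality $b^2+c^2+(2c-a)^2\ge a^2+b^2+c^2$ (and its permutations) deduces that the nonzero coordinates of $(a,b,c)$ are all equal to some $u$. The three lattices then correspond to whether $(a,b,c)$ has two, one, or zero vanishing coordinates, and in each case a further minimal-length estimate shows no extra elements can occur. Your route is instead algebraic: you take $u$ to be the minimal \emph{axial} value, observe that doubling any coordinate via $t+r\cdot t$ lands on an axis (so all coordinates lie in $\tfrac12 u\mathbb{Z}$), and then enumerate the cyclic-permutation-invariant subgroups of $(\tfrac12 u\mathbb{Z})^3/(u\mathbb{Z})^3\cong(\mathbb{Z}/2)^3$ that avoid the axial generators. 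Both arguments are short; yours is perhaps more systematic, while the paper's avoids any case enumeration and makes the parameter $u$ in the statement appear directly.

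One correction: your parenthetical about ``if no such nonzero triple exists'' is off. For any nonzero $(a,b,c)\in T$ one has $(2a,0,0)\in T$ by combining sign changes, so nontrivial $T$ always meets the axes; the body-centered and face-centered cases arise not from the absence of axial elements but from the presence of the extra half-integer cosets $(\tfrac12,\tfrac12,\tfrac12)$ or $\{(\tfrac12,\tfrac12,0),(\tfrac12,0,\tfrac12),(0,\tfrac12,\tfrac12)\}$ in your coset check. For part (2) your outline matches the paper's closely; just note that averaging over $r_\omega$ alone gives $(0,0,3c)$, and you need the $r_z$-sum $(0,0,2c)$ as well to conclude $(0,0,c)\in T$ and hence that $T$ splits as a direct sum.
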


\begin{proof}
We can assume that $T$ is nontrivial.

(1) Since $T$ is discrete, there is an element $(a,b,c)$ of $T$ having nonzero minimum distance to $(0,0,0)$. Since $(-a,-b,-c)$ is also an element of $T$, by Lemma \ref{lem:conjugate} we can assume that $a,b,c\geq 0$. Since $(-a,-b,c)$ and $(b,c,a)$ are elements of $T$,
\[(-b,-c,2c-a)=(a,b,c)+(-a,-b,c)-(b,c,a)\]
is a nonzero element of $T$. By the choice of $(a,b,c)$, we have
\[b^2+c^2+(2c-a)^2\geq a^2+b^2+c^2.\]
Hence $c(c-a)\geq 0$. We can also have $a(a-c)\geq 0$ and other similar inequalities about $a,b$ and $b,c$. Hence the nonzero ones in $a,b,c$ are equal. Let it be $u$.

If there are two zeros in $a,b,c$, then $T$ contains $\langle ut_x,ut_y,ut_z\rangle$ as a subgroup. For any $t\in T$, there is $t'\in \langle ut_x,ut_y,ut_z\rangle$ such that $t-t'\in [-u/2,u/2]^3$. Then
\[|t-t'|^2\leq \frac{u^2}{4}+\frac{u^2}{4}+\frac{u^2}{4}<u^2.\]
By the choice of $(a,b,c)$, we have $t=t'$ and $T$ equals $\langle ut_x,ut_y,ut_z\rangle$.

If there is exactly one zero in $a,b,c$, then $T$ contains $\langle 2ut_x,ut_y+ut_x,ut_z+ut_x\rangle$ as a subgroup. For any $t\in T$, there is $t'\in \langle 2ut_x,ut_y+ut_x,ut_z+ut_x\rangle$ such that $t-t'\in [-u,u]\times [-u/2,u/2]^2$. Then
\[|t-t'|^2\leq u^2+\frac{u^2}{4}+\frac{u^2}{4}<2u^2.\]
By the choice of $(a,b,c)$, we have $t=t'$ and $T$ equals $\langle 2ut_x,ut_y+ut_x,ut_z+ut_x\rangle$.

Otherwise, $T$ contains $\langle 2ut_x,2ut_y,2ut_{1/2}\rangle$ as a subgroup. For any $t\in T$, there is $t'\in \langle 2ut_x,2ut_y,2ut_{1/2}\rangle$ such that $t-t'\in [-u,u]^2\times [-u/2,u/2]$. Then
\[|t-t'|^2\leq u^2+u^2+\frac{u^2}{4}<3u^2.\]
By the choice of $(a,b,c)$, we have $t=t'$ and $T$ equals $\langle 2ut_x,2ut_y,2ut_{1/2}\rangle$.

(2) By Lemma \ref{lem:conjugate}, for any element $t=(a,b,c)$ in $T$ the two elements
\[(0,0,2c)=(a,b,c)+(-a,-b,c),\quad (0,0,3c)=t+r_\omega^{-1}tr_\omega+r_\omega^{-2}tr_\omega^2\]
belong to $T$. Hence $(0,0,c)$ and $(a,b,0)$ belong to $T$. Consider the subgroups
\[U=\{(a,b,c)\in T\mid c=0\},\quad V=\{(a,b,c)\in T\mid a=b=0\}.\]
Then $T$ is the direct sum of $U$ and $V$. Clearly there is $v\geq 0$ such that $V=\langle vt_z\rangle$.

We can assume that $U$ is nontrivial. Then there is $(a,b,0)$ in $U$ having nonzero minimum distance to $(0,0,0)$. By Lemma \ref{lem:conjugate} we can assume that $a,b>0$. Since
\[(-\frac{1}{2}a+\frac{\sqrt{3}}{2}b,-\frac{\sqrt{3}}{2}a+\frac{3}{2}b,0)= (a,b,0)+(-a,b,0)+(-\frac{1}{2}a+\frac{\sqrt{3}}{2}b,-\frac{\sqrt{3}}{2}a-\frac{1}{2}b,0)\]
is an element of $T$, if $a\neq \sqrt{3}b$, then by the choice of $(a,b,0)$ we have
\[(-\frac{1}{2}a+\frac{\sqrt{3}}{2}b)^2+(-\frac{\sqrt{3}}{2}a+\frac{3}{2}b)^2\geq a^2+b^2.\]
Hence $b(b-\sqrt{3}a)\geq 0$. Similarly we can have $a(\sqrt{3}a-b)\geq 0$. Hence $b=\sqrt{3}a$.

If $b=\sqrt{3}a$, let $u=2a$. Then $U$ contains $\langle ut_\omega,ut_x\rangle$ as a subgroup. By the choice of $(a,b,0)$, it is easy to see that $U$ equals $\langle ut_\omega,ut_x\rangle$. Hence $T$ is $\langle ut_\omega,ut_x,vt_z\rangle$.

If $a=\sqrt{3}b$, let $u=2a/3$. Then $T$ is $\langle 2ut_\omega+ut_x,ut_\omega+2ut_x,vt_z\rangle$.
\end{proof}

\begin{proposition}\label{pro:subgroup}
All finite index normal translation subgroups of the fundamental groups of the 3-orbifolds in Proposition \ref{pro:pairs} are as below, where $m,n\in \mathbb{Z}_+$.
\begin{itemize}
\item $[\text{P}432]$ : $T_{n^3}$, $T_{2n^3}$, $T_{4n^3}$ (with indices $24n^3$, $48n^3$, $96n^3$).
\item $[\text{F}4_132]$ : $T_{2n^3}$, $T_{8n^3}$, $T_{32n^3}$ (with indices $24n^3$, $96n^3$, $384n^3$).
\item $[\text{I}4_132]$ : $T_{4n^3}$, $T_{8n^3}$, $T_{16n^3}$ (with indices $24n^3$, $48n^3$, $96n^3$).
\item $[\text{I}432]$ : $T_{n^3/2}$, $T_{n^3}$, $T_{2n^3}$ (with indices $24n^3$, $48n^3$, $96n^3$).
\item $[\text{P}4_232]$ : $T_{n^3}$, $T_{2n^3}$, $T_{4n^3}$ (with indices $24n^3$, $48n^3$, $96n^3$).
\item $[\text{P}622]$ : $\langle nt_\omega,nt_x,mt_z\rangle$, $\langle 2nt_\omega+nt_x,nt_\omega+2nt_x,mt_z\rangle$ (with indices $12mn^2$, $36mn^2$).
\end{itemize}
\end{proposition}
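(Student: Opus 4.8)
The plan is to reduce Proposition \ref{pro:subgroup} to the lattice-theoretic Lemma \ref{lem:invariant} and then, group by group, to decide which of the invariant translation lattices produced by that lemma actually sit inside the ambient Bieberbach lattice. First I would record the reduction. For each group $\Gamma=\pi_1(\mathcal{O})$ in Proposition \ref{pro:pairs}, its translation subgroup (the kernel of the map $\Gamma\to\mathrm{O}(3)$ onto the point group) is the lattice $\Lambda$ that appears first in the presentation of Lemma \ref{lem:rep.}, namely $T_1$, $T_2$, $T_4$, $T_{1/2}$, $T_1$, $T^\omega_1$ respectively. Any subgroup of $\Gamma$ consisting of translations lies in $\Lambda$, and such a $T\le\Lambda$ has finite index in $\Gamma$ iff it has finite index in $\Lambda$, i.e. iff it is a rank-$3$ sublattice. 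Since $\Lambda$ is abelian it normalizes every subgroup of $\Lambda$, and for a translation $t$ and an isometry $\tau\rho$ with $\tau$ a translation and $\rho$ linear one has $(\tau\rho)^{-1}t(\tau\rho)=\rho^{-1}t\rho$; hence $T\trianglelefteq\Gamma$ precisely when $T$ is invariant under conjugation by the rotational parts of the generators listed in Lemma \ref{lem:rep.}. For the first five groups these rotational parts include $r_y,r_z,r_{xyz}$, and for $[\text{P}622]$ they include $r_y,r_z,r_\omega$.

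With this in hand, I would invoke Lemma \ref{lem:invariant}. For the first five groups it says that a finite-index normal translation subgroup must, for some $u>0$, equal one of $\langle ut_x,ut_y,ut_z\rangle=T_{u^3}$, $\langle 2ut_x,ut_y+ut_x,ut_z+ut_x\rangle=T_{2u^3}$, or $\langle 2ut_x,2ut_y,2ut_{1/2}\rangle$ (the last being $T_{4k^3}$ when $u=k\in\mathbb{Z}_+$ and $T_{n^3/2}$ when $2u=n$ is an odd integer); the values $u=0$ are excluded since they give non-finite-index subgroups. For $[\text{P}622]$ the subgroup must, for some $u,v>0$, equal $\langle ut_\omega,ut_x,vt_z\rangle$ or $\langle 2ut_\omega+ut_x,ut_\omega+2ut_x,vt_z\rangle$. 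What then remains is to determine, for each group, which of these candidates are contained in the corresponding $\Lambda$, and to compute the index as $[\Gamma:\Lambda]\cdot[\Lambda:T]$, where $[\Gamma:\Lambda]=24$ in the first five cases and $12$ for $[\text{P}622]$ by Lemma \ref{lem:rep.}, and $[\Lambda:T]$ is the ratio of covolumes.

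Finally I would run the containment check case by case from an explicit description of each $\Lambda$. For $T_1=\mathbb{Z}^3$ all three families lie in $\Lambda$ exactly when $u\in\mathbb{Z}_+$, giving $T_{n^3},T_{2n^3},T_{4n^3}$ for $[\text{P}432]$ and $[\text{P}4_232]$, with indices $24n^3,48n^3,96n^3$ (using $[\mathbb{Z}^3:T_{n^3}]=n^3$, $[\mathbb{Z}^3:T_{2n^3}]=2n^3$, $[\mathbb{Z}^3:T_{4n^3}]=4n^3$). For $T_2=\{(a,b,c)\in\mathbb{Z}^3:a+b+c\text{ even}\}$ the parity constraint forces $u$ even in the first and third families but not in the second, giving $T_{8n^3},T_{2n^3},T_{32n^3}$ for $[\text{F}4_132]$; for $T_4=\{(a,b,c)\in\mathbb{Z}^3:a\equiv b\equiv c\!\pmod 2\}$ one gets $T_{8n^3},T_{16n^3},T_{4n^3}$ for $[\text{I}4_132]$; for $T_{1/2}=\mathbb{Z}^3\cup(\mathbb{Z}^3+(\tfrac12,\tfrac12,\tfrac12))$ one gets $T_{n^3},T_{2n^3},T_{n^3/2}$ for $[\text{I}432]$. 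For $[\text{P}622]$, since $t_\omega,t_x$ generate the planar hexagonal lattice and the vectors $2t_\omega+t_x$ and $t_\omega+2t_x$ are primitive in it, both families survive for all $u=n\in\mathbb{Z}_+$ and $v=m\in\mathbb{Z}_+$, giving $\langle nt_\omega,nt_x,mt_z\rangle$ and $\langle 2nt_\omega+nt_x,nt_\omega+2nt_x,mt_z\rangle$ with indices $12mn^2$ and $36mn^2$ (the planar index of the second lattice being $\det\bigl(\begin{smallmatrix}2n&n\\ n&2n\end{smallmatrix}\bigr)=3n^2$). The conceptual obstacle is already dealt with by Lemma \ref{lem:invariant}; the one remaining delicate point, and the place where the bookkeeping must be done carefully, is tracking exactly which parameter values are admissible in the third ``body-centered'' family inside each $\Lambda$ — integer $u$ versus half-integer $u$ — since this is precisely what distinguishes the subfamilies $T_{n^3/2},T_{4n^3},T_{8n^3},T_{16n^3},T_{32n^3}$ across the six cases.
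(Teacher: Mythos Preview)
Your proposal is correct and follows essentially the same route as the paper: reduce normality of a translation subgroup to invariance under the rotational parts of the generators, apply Lemma~\ref{lem:invariant} to obtain the three (resp.\ two) candidate lattice families, and then determine the admissible parameter values by checking containment in the maximal translation lattice $\Lambda$. The only cosmetic difference is that you describe each $\Lambda$ by an explicit congruence condition on coordinates, whereas the paper rewrites the candidate generators in the given basis of $\Lambda$ and reads off integrality constraints (it illustrates this for $[\mathrm{F}4_132]$); one small point you leave implicit is that the three families from Lemma~\ref{lem:invariant}(1) are automatically invariant under $r_{xy}$ as well, so the necessary condition is also sufficient.
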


\begin{proof}
By Lemma \ref{lem:conjugate}, all the listed groups are normal translation subgroups with finite index. Note that $T_1,T_2,T_4,T_{1/2},T_1,T^\omega_1$ in the representations in Lemma \ref{lem:rep.} are the maximal translation subgroups respectively, because for such a group $T$ in the corresponding space group $H$ the $H/T$-action on $\mathbb{R}^3/T$ contains no translations. Then the proof can be finished by using Lemma \ref{lem:invariant}, because the generators of the required group can be uniquely presented by the generators of the maximal translation subgroup and the parameters $u,v$ must have certain forms.

As an example, let $T$ be a finite index normal translation subgroup of the space group $[\text{F}4_132]$. By Lemma \ref{lem:invariant}, there is $u>0$ such that $T$ is one of
\[\langle ut_x,ut_y,ut_z\rangle,\quad \langle 2ut_x,ut_y+ut_x,ut_z+ut_x\rangle,\quad \langle 2ut_x,2ut_y,2ut_{1/2}\rangle.\]

If it is $\langle 2ut_x,ut_y+ut_x,ut_z+ut_x\rangle$, then since $2ut_x=u(2t_x)$, we have $u\in \mathbb{Z}_+$. Hence $u=n$ and $T=T_{2n^3}$.

If it is $\langle ut_x,ut_y,ut_z\rangle$, then since
\[ut_y=-\frac{u}{2}(2t_x)+u(t_y+t_x),\]
we have $u\in \mathbb{Z}_+$ and $2\mid u$. Hence $u=2n$ and $T=T_{(2n)^3}=T_{8n^3}$.

If it is $\langle 2ut_x,2ut_y,2ut_{1/2}\rangle$, then since
\[2ut_{1/2}=-\frac{u}{2}(2t_x)+u(t_y+t_x)+u(t_z+t_x),\]
we have $u\in \mathbb{Z}_+$ and $2\mid u$. Hence $u=2n$ and $T=T_{(4n)^3/2}=T_{32n^3}$.
\end{proof}

\begin{theorem}\label{thm:main}
Up to conjugation, all $G$-actions of maximal possible order $12(g-1)$ on a pair $(\mathbb{T}^3,\Sigma_g)$, with $(\mathbb{T}^3/G,\Sigma_g/G)=(\mathcal{O},\mathcal{F})$, are obtained as the regular coverings of the orbifolds $\mathcal{O}$ corresponding to the
following normal translation subgroups of $\pi_1(\mathcal{O})$ (where $\mathcal{F}$ is the boundary of a regular neighborhood of one the nine singular edges denoted by $\alpha$, $\beta$ or $\gamma$ in Figure \ref{fig:pairs}).
\begin{itemize}
\item $([\text{P}432],\alpha)$ : $T_{n^3}$, $T_{2n^3}$, $T_{4n^3}$.
\item $([\text{F}4_132],\alpha)$ : $T_{2n^3}$, $T_{8n^3}$, $T_{32n^3}$.
\item $([\text{I}4_132],\alpha)$ : $T_{4n^3}$, $T_{8n^3}$, $T_{16n^3}$.
\item $([\text{I}432],\beta)$ : $T_{n^3/2}(2\nmid n)$.
\item $([\text{P}4_232],\beta)$ : $T_{n^3}(2\nmid n)$, $T_{4n^3}(2\nmid n)$.
\item $([\text{P}4_232],\gamma)$ : $T_{n^3}(2\nmid n)$, $T_{2n^3}(2\nmid n)$.
\item $([\text{I}432],\gamma)$ : $T_{n^3/2}(2\nmid n)$.
\item $([\text{I}4_132],\beta)$ : $T_{4n^3}(3\nmid n)$, $T_{8n^3}(3\nmid n)$, $T_{16n^3}(3\nmid n)$.
\item $([\text{P}622],\beta)$ : $T^\omega_{n^2}$, $T^\omega_{3n^2}$.
\end{itemize}
\end{theorem}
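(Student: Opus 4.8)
The plan is to carry out Step~\ref{ste:find covering}, using Propositions~\ref{pro:pairs} and~\ref{pro:subgroup}.

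First I would reduce the classification to a single connectedness question. Suppose $G$ acts on $(\mathbb{T}^3,\Sigma_g)$ with $|G|=12(g-1)$ and quotient pair $(\mathcal{O},\mathcal{F})$; by Proposition~\ref{pro:pairs} this pair is one of the nine in Figure~\ref{fig:pairs}, and the covering $p\colon\mathbb{T}^3\to\mathcal{O}$ corresponds to a torsion-free finite-index normal subgroup $\Gamma=\pi_1(\mathbb{T}^3)\trianglelefteq\pi_1^{\mathrm{orb}}(\mathcal{O})=:\Pi$ with $\mathbb{R}^3/\Gamma\cong\mathbb{T}^3$. Since $\mathbb{R}^3$ is simply connected, $\Gamma$ is abelian, and an abelian Bieberbach group has trivial holonomy (the holonomy acts faithfully by conjugation on the translation lattice, hence trivially when the group is abelian); so $\Gamma$ consists of translations and is therefore one of the normal translation subgroups enumerated in Proposition~\ref{pro:subgroup}, and the factorization $p=p_0\circ q$ of the introduction is automatic. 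Conversely every such $\Gamma$ yields a $G$-action on $\mathbb{T}^3$, and as soon as $p^{-1}(\mathcal{F})$ is connected it is a closed surface $\Sigma_g$ covering $\mathcal{F}\cong\mathbb{S}^2(2,2,2,3)$ of degree $|G|$; since $\chi^{\mathrm{orb}}(\mathcal{F})=-1/6$, Riemann--Hurwitz then forces $|G|=12(g-1)$ by itself, with $g-1=[\Pi:\Gamma]/12$, which matches the numbers in Theorem~\ref{thm:classification}. So the whole problem reduces to deciding, for each of the nine pairs and each $\Gamma$ of Proposition~\ref{pro:subgroup}, whether $p^{-1}(\mathcal{F})$ is connected.

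Next I would set up the connectedness criterion. The components of $p^{-1}(\mathcal{F})$ are in bijection with the double cosets $\Gamma\backslash\Pi/K$, where $K\le\Pi$ is the image of $\pi_1^{\mathrm{orb}}(\mathcal{F})$; since $\Gamma$ is normal these biject with the cosets of the subgroup $\Gamma K$, so $p^{-1}(\mathcal{F})$ is connected iff $\Pi=\Gamma\cdot K$, equivalently iff the image of $K$ generates $G=\Pi/\Gamma$. Here $K$ is generated by the images of the four cone-point loops of $\mathcal{F}\cong\mathbb{S}^2(2,2,2,3)$, and by Lemma~\ref{lem:handlebody} these are conjugates of the rotations (of orders $2,2,2,3$) along the four singular edges of $\mathcal{O}$ that cross $\mathcal{F}$; in each of the six representations of Lemma~\ref{lem:rep.} they can be written down explicitly from Figure~\ref{fig:pairs}. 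This is exactly the point at which the connectedness lemma of \cite{WWZZ2} is invoked, and it turns the problem into a finite generation computation inside each space group.

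Then comes the case analysis. For the three ``$\alpha$'' orbifolds the complement of the handlebody orbifold bounded by $\mathcal{F}$ is again a handlebody orbifold and $K$ already maps onto the full point group of $\Pi$; one checks that the images of the $K$-generators keep generating $G$ for every $\Gamma$ in Proposition~\ref{pro:subgroup}, so all three survive, giving the first three items (and, via the listed indices, the three unknotted columns of Theorem~\ref{thm:classification}), and since these coverings occur among the examples of \cite{BRWW} they are realized by equivariant minimal surfaces, which is Corollary~\ref{cor:minimal}. For the knotted ``$\beta$'' and ``$\gamma$'' edges $K$ is a proper ``one-sided'' finite subgroup, and $\Pi=\Gamma\cdot K$ holds only for part of the list: for $([\mathrm{I}432],\beta)$ and $([\mathrm{I}432],\gamma)$ only $T_{n^3/2}$ with $n$ odd, for $([\mathrm{P}4_232],\beta)$ only $T_{n^3}$ and $T_{4n^3}$ with $n$ odd, for $([\mathrm{P}4_232],\gamma)$ only $T_{n^3}$ and $T_{2n^3}$ with $n$ odd, for $([\mathrm{I}4_132],\beta)$ all three of $T_{4n^3},T_{8n^3},T_{16n^3}$ but only with $3\nmid n$, and for the fibred case $([\mathrm{P}622],\beta)$ the $z$-period $m$ of Proposition~\ref{pro:subgroup} is forced to be $1$, leaving $T^\omega_{n^2}$ and $T^\omega_{3n^2}$. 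Finally, distinctness of the resulting triples $(\mathcal{O},\mathcal{F},\Gamma)$, hence non-conjugacy of the actions, follows because the quotient pair and then the covering index are invariants of the action and the nine pairs of Proposition~\ref{pro:pairs} are pairwise non-isomorphic.

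The main obstacle is this last case analysis, i.e.\ Step~\ref{ste:find covering} proper: one must correctly identify, inside each of the six space groups of Lemma~\ref{lem:rep.}, the finite subgroup $K$ attached to each of the nine marked edges, and then determine \emph{exactly} which translation lattices $\Gamma$ of Proposition~\ref{pro:subgroup} satisfy $\Gamma\cdot K=\Pi$. For the $\beta$- and $\gamma$-edges this is where the arithmetic conditions $2\nmid n$ and $3\nmid n$ genuinely enter — they record precisely when halving or thirding the lattice produces a translation that cannot be absorbed into $\Gamma\cdot K$ — and care is needed both to keep every surviving subgroup and to discard the spurious ones, in particular the lattices with $m>1$ in the $[\mathrm{P}622]$ row. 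Once these generation questions are resolved, the remaining identifications of genus and of the families in \cite{BRWW} are bookkeeping.
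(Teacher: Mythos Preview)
Your reduction and connectedness criterion are exactly right and coincide with the paper's: one must decide, for each of the nine $(\mathcal{O},\mathcal{F})$ and each normal translation lattice $\Gamma$ from Proposition~\ref{pro:subgroup}, whether $\Gamma\cdot K=\Pi$ where $K$ is the image of $\pi_1^{\mathrm{orb}}(\mathcal{F})$ in $\Pi$. The treatment of the $\alpha$-edges is also the paper's: since $\mathcal{F}$ is a Heegaard surface of $\mathcal{O}$, the inclusion of either side is $\pi_1$-surjective, so $K=\Pi$ and every $\Gamma$ works.

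There is, however, a genuine misconception in the knotted cases. The subgroup $K$ is \emph{not} finite: for each $\beta$- or $\gamma$-edge the intersection $K\cap T_0$ is a full-rank sublattice of the maximal translation group $T_0$ (for instance $K\cap T_0=T_{108}$ in the case $([\mathrm{I}4_132],\beta)$, and $K\cap T_0=\langle t_\omega,t_x\rangle$ for $([\mathrm{P}622],\beta)$). If $K$ were finite, $\Gamma K=\Pi$ could hold for at most finitely many $\Gamma$, contradicting the infinite families in the statement. What actually distinguishes the $\alpha$-edges from the others is not that $K$ surjects onto the point group $\Pi/T_0$ (this holds in all nine cases, being equivalent to connectedness of $p_0^{-1}(\eta)$), but that $K\cap T_0=T_0$, i.e.\ $K=\Pi$.

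The paper's proof supplies precisely the ingredient your plan leaves open: rather than working with $K\le\Pi$ directly, it factors through the minimal torus cover $p_0$ and computes, for each edge $\eta$, the sublattice $\hat{i}_*(\pi_1(p_0^{-1}(\eta)))=K\cap T_0\le T_0$ by explicitly drawing the graph $p_0^{-1}(\eta)$ in a fundamental domain of $T_0$ (Claims~\ref{cla:connected} and~\ref{cla:image}, illustrated in Section~\ref{sec:example}). The connectedness test then becomes the pure lattice condition $(K\cap T_0)\cdot\Gamma=T_0$, from which the parity and divisibility constraints $2\nmid n$, $3\nmid n$, and $m=1$ drop out immediately. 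Your outline would go through once you replace ``finite subgroup $K$'' by this explicit lattice computation; without it the case analysis you flag as the main obstacle has no concrete mechanism.
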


\begin{proof}
Note that for each $\mathcal{O}$ in Proposition \ref{pro:pairs} the minimal covering $p_0$ corresponds to the maximal translation subgroup $T_0$ of $\pi_1(\mathcal{O})$, and a regular covering $p$ as in Step \ref{ste:find covering} corresponds to a finite index normal translation subgroup $T$ of $\pi_1(\mathcal{O})$.

Let $\eta$ denote a marked singular edge in $\mathcal{O}$. To finish the proof, we need to check for each of the normal translation subgroups $T$ in Proposition \ref{pro:subgroup} whether $p^{-1}(\eta)$ is connected. In the unknotted cases, i.e. for each of the three edges $\alpha$ in Figure \ref{fig:pairs}, this follows immediately from Lemma \ref{lem:connected} (since the fundamental group of a regular neighborhood of an edge $\alpha$ clearly surjects onto the fundamental group of $\mathcal{O}$, or onto the fundamental groups of the two handlebody orbifolds into which $\mathcal{O}$ splits).

The general case is a consequence of the following two claims.

\begin{claim}\label{cla:connected}
$p_0^{-1}(\eta)$ is a connected graph in $\mathbb{R}^3/T_0$.
\end{claim}

\begin{claim}\label{cla:image}
Let $\hat{i}$ denote the embedding of $p_0^{-1}(\eta)$ in $\mathbb{R}^3/T_0$, then in each case $T_0$ and the image of $\pi_1(p_0^{-1}(\eta))$ in $T_0$ are given in the list below.
\begin{itemize}
\item $([\text{P}432],\alpha)$ : $T_0=T_1$, $\hat{i}_*(\pi_1(p_0^{-1}(\alpha)))=T_1$.
\item $([\text{F}4_132],\alpha)$ : $T_0=T_2$, $\hat{i}_*(\pi_1(p_0^{-1}(\alpha)))=T_2$.
\item $([\text{I}4_132],\alpha)$ : $T_0=T_4$, $\hat{i}_*(\pi_1(p_0^{-1}(\alpha)))=T_4$.
\item $([\text{I}432],\beta)$ : $T_0=T_{1/2}$, $\hat{i}_*(\pi_1(p_0^{-1}(\beta)))=T_1$.
\item $([\text{P}4_232],\beta)$ : $T_0=T_1$, $\hat{i}_*(\pi_1(p_0^{-1}(\beta)))=T_2$.
\item $([\text{P}4_232],\gamma)$ : $T_0=T_1$, $\hat{i}_*(\pi_1(p_0^{-1}(\gamma)))=T_4$.
\item $([\text{I}432],\gamma)$ : $T_0=T_{1/2}$, $\hat{i}_*(\pi_1(p_0^{-1}(\gamma)))=T_4$.
\item $([\text{I}4_132],\beta)$ : $T_0=T_4$, $\hat{i}_*(\pi_1(p_0^{-1}(\beta)))=T_{108}$.
\item $([\text{P}622],\beta)$ : $T_0=T^\omega_1$, $\hat{i}_*(\pi_1(p_0^{-1}(\beta)))=\langle t_\omega,t_x\rangle$.
\end{itemize}
\end{claim}

Now let $T$ be a subgroup in Proposition \ref{pro:subgroup} and $p$ be its corresponding covering. Then there is a covering $q$ such that $p=p_0\circ q$. Since $p_0^{-1}(\eta)$ is connected, by Lemma \ref{lem:connected} the graph $p^{-1}(\eta)=q^{-1}(p_0^{-1}(\eta))$ is connected if and only if
\[\hat{i}_*(\pi_1(p_0^{-1}(\eta)))\cdot T=T_0.\]
Hence assuming the two claims one can check this condition case by case to obtain Theorem \ref{thm:main}.

The two claims can be shown as following.

Since the representation of $\pi_1(\mathcal{O})$ as a space group is given in Lemma \ref{lem:rep.}, one can get the pre-fundamental domain of the $\pi_1(\mathcal{O})$-action on $\mathbb{R}^3$, which consists of points $(x,y,z)$ in $\mathbb{R}^3$ satisfying
\[|(x,y,z)-(0,0,0)|\leq |(x,y,z)-h\cdot(0,0,0)|,\quad \forall\, h\in \pi_1(\mathcal{O}).\]
Modular the action of the stable subgroup of $(0,0,0)$ one can get the fundamental domain of the $\pi_1(\mathcal{O})$-action, and folding up the fundamental domain the 3-orbifold can be obtained. Then the position of the singular edge can be determined and the part of $p_0^{-1}(\eta)$ in a fundamental domain of the $T_0$-action on $\mathbb{R}^3$ can be obtained. Finally, the two claims can be checked.

In section \ref{sec:example}, we will give an explicit example to illustrate this procedure.
\end{proof}

\begin{lemma}[\cite{WWZZ2}]\label{lem:connected}
Suppose that a finite group $G$ acts on $(M,F)$, where $M$ is a 3-manifold with an embedding $i:F\hookrightarrow M$ of a surface. We have diagrams:
\[\xymatrix{
  F \ar[d]_{p} \ar[r]^{i} & M \ar[d]^{p}
  & \pi_1(F) \ar[d]_{p_*} \ar[r]^{i_*} & \pi_1(M) \ar[d]^{p_*} \\
  F/G  \ar[r]^{\hat{i}} & M/G
  & \pi_1(F/G)  \ar[r]^{\hat{i}_*} & \pi_1(M/G)
  }\]
Suppose that $F/G$ is connected. Then $F$ is connected if and only if
\[\hat{i}_*(\pi_1(F/G))\cdot p_*(\pi_1(M))=\pi_1(M/G).\]
\end{lemma}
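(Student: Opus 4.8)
The final statement to prove is Lemma~\ref{lem:connected}, a connectedness criterion for the preimage of a surface under a regular covering induced by a finite group action. Here is how I would approach it.

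\medskip

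\emph{Approach.} The plan is to work entirely with covering-space theory and the correspondence between connected covers and subgroups of the fundamental group. The key observation is that $p\co M\to M/G$ is a regular covering with deck group $G$, so connected components of $p^{-1}(F/G)$ correspond to the orbits of $G$ — equivalently to the double cosets — and the component containing a fixed basepoint $\tilde{x}\in F$ over $x\in F/G$ is described by the subgroup of $\pi_1(M/G,x)$ generated by $p_*\pi_1(M,\tilde{x})$ together with $\hat{i}_*\pi_1(F/G,x)$. The statement ``$F$ is connected'' is then just the statement that this subgroup is all of $\pi_1(M/G,x)$, which is the displayed equation.

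\medskip

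\emph{Key steps, in order.} First I would fix basepoints: pick $x\in F/G$ in the regular part, a lift $\tilde{x}\in F\subset M$, and note $p\co F\to F/G$ is the (possibly disconnected) covering obtained by restricting $p\co M\to M/G$, since $F=p^{-1}(F/G)\cap(\text{the given }G\text{-invariant surface})$; more precisely $F\to F/G$ is the pullback of $M\to M/G$ along $\hat{i}$. Second, I would recall the general fact: if $\tilde{Y}\to Y$ is a connected regular covering with deck group $G$ and $A\subseteq Y$ is a connected subspace, then the components of the preimage of $A$ are permuted transitively by $G$, and the component through a chosen lift of a basepoint $a\in A$ is the cover of $A$ corresponding to the subgroup $(\iota_*)^{-1}\bigl(\mathrm{im}(p_*\pi_1(\tilde Y))\bigr)\le\pi_1(A,a)$ where $\iota\co A\hookrightarrow Y$; consequently the preimage is connected iff $\iota_*\pi_1(A)\cdot p_*\pi_1(\tilde Y)=\pi_1(Y)$, because the number of components equals the index $[\pi_1(Y):\iota_*\pi_1(A)\cdot p_*\pi_1(\tilde Y)]$. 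Third, I would apply this with $\tilde Y=M$, $Y=M/G$, $A=F/G$, $\iota=\hat i$, and $\tilde Y$'s chosen component-preimage being $F$: since $F/G$ is assumed connected, the hypothesis of the general fact is met, and the conclusion is precisely that $F$ is connected iff $\hat i_*\pi_1(F/G)\cdot p_*\pi_1(M)=\pi_1(M/G)$.

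\medskip

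\emph{Main obstacle.} The only subtlety — and the step I would be most careful about — is justifying that $F\to F/G$ really is the pullback covering and that the ``transitive permutation of components'' picture applies even though $M$ itself need not be simply connected (the deck group $G$ is finite but $\pi_1(M)$ may be infinite, as in the $\mathbb{T}^3$ application). This is handled by the double-coset description: components of $p^{-1}(F/G)$ biject with $p_*\pi_1(M)\backslash\pi_1(M/G)/\hat i_*\pi_1(F/G)$, and one checks that $G$ acts transitively on them because $p\co M\to M/G$ is \emph{regular}, so every component is a $G$-translate of the one through $\tilde x$; counting gives the index formula, and connectedness is the index-one case. Everything else is the standard dictionary between covers and subgroups, with a diagram chase through the two commutative squares in the statement. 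Since the lemma is quoted from \cite{WWZZ2}, in the paper itself one would simply cite it; the sketch above is how one reconstructs the proof.
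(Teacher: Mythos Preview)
Your reconstruction is correct. The paper does not prove this lemma at all; it simply quotes it from \cite{WWZZ2}, exactly as you anticipate in your final sentence. Your covering-space argument is the standard one: since $p\co M\to M/G$ is regular, $p_*\pi_1(M)$ is normal in $\pi_1(M/G)$, the components of $p^{-1}(F/G)$ are in bijection with the cosets of $\hat i_*\pi_1(F/G)\cdot p_*\pi_1(M)$, and connectedness is the index-one case.

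One small cleanup: your phrase ``$F=p^{-1}(F/G)\cap(\text{the given }G\text{-invariant surface})$'' is redundant, since $F$ \emph{is} the $G$-invariant surface and hence $F=p^{-1}(F/G)$ outright; the pullback description you give immediately after is the clean way to say it. Otherwise there is nothing to add.
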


\begin{remark}
The nine classes in Theorem \ref{thm:main} correspond to the nine examples in \cite{BRWW}. Note that each of the examples must correspond to some $\alpha$ or $\beta$ or $\gamma$. In $[\text{P}4_232]$ and $[\text{I}432]$ the graphs $p_0^{-1}(\beta)$ and $p_0^{-1}(\gamma)$ can be distinguished by the local stable subgroups. In $[\text{I}4_132]$ the graphs $p_0^{-1}(\alpha)$ and $p_0^{-1}(\beta)$ can be distinguished by the property of whether the corresponding surface is knotted or not. Then we have the following correspondence where $\Gamma^1$, $\Gamma^2$, $\Gamma^4$, $\Gamma^{1,2}$, $\Gamma^{2,2}$, $\Gamma^{4,4}$, $\Gamma^{4,8}$, $\Gamma'^4$, $\Gamma^P$ are the nine graphs defined in the examples in \cite{BRWW}.
\begin{itemize}
\item $[\text{P}432]$ : $p_0^{-1}(\alpha)=\Gamma^1/T_1$.
\item $[\text{F}4_132]$ : $p_0^{-1}(\alpha)=\Gamma^2/T_2$.
\item $[\text{I}4_132]$ : $p_0^{-1}(\alpha)=\Gamma^4/T_4$, $p_0^{-1}(\beta)=\Gamma'^4/T_4$.
\item $[\text{I}432]$ : $p_0^{-1}(\beta)=\Gamma^{1,2}/T_{1/2}$, $p_0^{-1}(\gamma)=\Gamma^{4,8}/T_{1/2}$.
\item $[\text{P}4_232]$ : $p_0^{-1}(\beta)=\Gamma^{2,2}/T_1$, $p_0^{-1}(\gamma)=\Gamma^{4,4}/T_1$.
\item $[\text{P}622]$ : $p_0^{-1}(\beta)=\Gamma^P/T^\omega_1$.
\end{itemize}
Hence the Claim \ref{cla:connected} can be derived from the nine examples. The connectedness of $p^{-1}(\eta)$ in the proof of Theorem \ref{thm:main} can also be checked directly by using the above correspondence, and this gives an alternative approach to the proof of Theorem \ref{thm:main}, on the basis of the graphs in \cite{BRWW}.
\end{remark}

\begin{proof}[Proof of Theorem \ref{thm:classification}]
Since the order of $G$ is $12(g-1)$, one can compute the genus $g$ in Theorem \ref{thm:main} case by case. Then the list can be obtained.
\end{proof}

\begin{remark}
One can also consider similar questions for graphs and handlebodies, where we need to replace the genus of the surface by the algebraic genus of the graph or handlebody, which equals the rank of the fundamental group. Then for extendable actions over $\mathbb{T}^3$, the upper bound is $12(g-1)$ and the classification result is parallel to the case of surfaces.
\end{remark}

\section{An example}\label{sec:example}
Here we give an example to illustrate the correspondence between the space group and the Euclidean orbifold which was first given by Dunbar. Via this we can obtain pre-images of the marked singular edges in Proposition \ref{pro:pairs}. Then we can check the claims in the proof of Theorem \ref{thm:main}.

Consider the space group $[\text{I}4_132]$ which has the representation
\[\langle t_xt_x,t_yt_y,t_xt_yt_z,t_zt_yr_y,t_xt_zr_z,t_xt_{1/2}r_{xy},r_{xyz}\rangle.\]
Note that the subgroup generated by the first three generators is $T_4$, which is the maximal translation subgroup of $[\text{I}4_132]$.

Let $S$ be the orbit of $(0,0,0)$ under the action of $T_4$. It is invariant under $r_y$, $r_z$, $r_{xy}$ and $r_{xyz}$. Hence the orbit of $(0,0,0)$ under the action of $[\text{I}4_132]$ contains
\[S,\quad t_zt_yS,\quad t_xt_zS,\quad t_xt_{1/2}S.\]
Then it is easy to see that this orbit is the same as the orbit of $(0,0,0)$ under the action of $T_{1/2}$. Hence the closest points to $(0,0,0)$ in the orbit are
\[(\pm1,0,0),\quad (0,\pm1,0),\quad (0,0,\pm1),\quad (\pm\frac{1}{2},\pm\frac{1}{2},\pm\frac{1}{2}).\]
Then we can get a pre-fundamental domain of the space group $[\text{I}4_132]$ as shown in Figure \ref{fig:prefund}, which is a truncated octahedron in the cube $[-1/2,1/2]^3$. In Figure \ref{fig:prefund}, several rotation axes are also pictured. We write down some of them.
\begin{itemize}
\item $\pi$-rotation around $x=1/2,z=0$ : $t_xt_x(t_xt_yt_z)^{-1}t_zt_yr_y$.
\item $\pi$-rotation around $x=0,y=-1/2$ : $(t_xt_yt_z)^{-1}t_xt_zr_z$.
\item $\pi$-rotation around $z=-1/4,x-y=1/2$ : $(t_xt_xt_yt_y)^{-1}t_xt_{1/2}r_{xy}(t_xt_yt_z)$.
\item $\pi$-rotation around $x=1/4,z-y=1/2$ : $r_{xyz}(t_xt_x)^{-1}t_xt_{1/2}r_{xy}r_{xyz}^{-1}$.
\item $\pi$-rotation around $z=1/4,x+y=1/2$ : $t_xt_{1/2}r_{xy}(t_xt_yt_z)^{-1}t_xt_zr_z$.
\end{itemize}
Since $x=y=z$ is a rotation axe of order $3$, we can get other axes in the figure. Note that the quotient group $[\text{I}4_132]/T_4$ has order $24$ and the fundamental domain of $T_4$ has volume $4$, hence the fundamental domain of $[\text{I}4_132]$ has volume $1/6$. Since each hexagon in Figure \ref{fig:prefund} cuts the smaller cube into pieces having equal volume, the volume of the pre-fundamental domain is $1/2$. Hence we know that the stable subgroup of $(0,0,0)$ is generated by $r_{xyz}$.

\begin{figure}[h]
\includegraphics{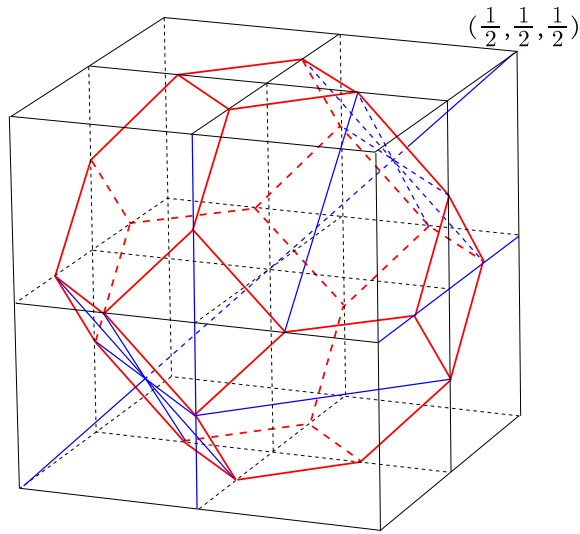}
\caption{Red polyhedron in $[-1/2,1/2]^3$ is a pre-fundamental domain of $[\text{I}4_132]$. Blue lines indicate part of the rotation axes.}\label{fig:prefund}
\end{figure}

\begin{figure}[h]
\includegraphics{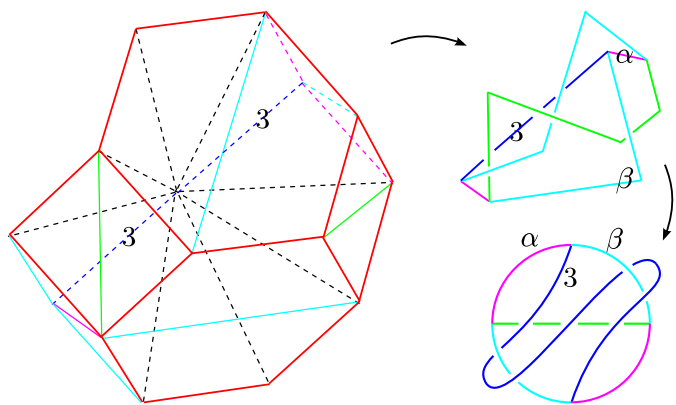}
\caption{The left picture is the fundamental domain of $[\text{I}4_132]$. Folding it up we get the right upper picture. It is isotopic to the right lower picture which is given in the Dunbar's list.}\label{fig:orbifold}
\end{figure}

Then we can obtain the fundamental domain of the space group $[\text{I}4_132]$ as in the left picture of Figure \ref{fig:orbifold}. Here we use different colours to distinguish the rotation axes. To get the orbifold, we first fold up the squares and (part of) the hexagons; then we fold up the remaining faces around the axe of order $3$. The orbifold has underlying space $\mathbb{S}^3$ and singular set as shown in the right pictures of Figure \ref{fig:orbifold}.

The corresponding orbifolds of other space groups can be obtained in a similar way. At what follows, we will check the two claims in the case of $([\text{I}4_132],\beta)$.

From Figure \ref{fig:prefund} and Figure \ref{fig:orbifold} we see that the singular edge $\beta$ corresponds to the union of the following two edges
\[\{(\frac{1}{4},\frac{1}{4},\frac{1}{4}),(\frac{1}{2},0,\frac{1}{4})\},\quad \{(\frac{1}{2},0,-\frac{1}{4}),(0,-\frac{1}{2},-\frac{1}{4})\}.\]
Via the $\pi$-rotation around $z=1/4,x+y=1/2$ these become the two edges
\[\{(\frac{1}{4},\frac{1}{4},\frac{1}{4}),(\frac{1}{2},0,\frac{1}{4})\},\quad \{(\frac{1}{2},0,\frac{3}{4}),(1,\frac{1}{2},\frac{3}{4})\},\]
in $[0,1]^3$. To get the part of $p_0^{-1}(\beta)$ in $[0,2]^2\times[0,1]$ which is a fundamental domain of $T_4$, we just need to apply the following actions to the two edges:
\[r_{xyz},\quad t_xt_xt_zt_yr_y,\quad t_xt_xt_yt_y(t_xt_yt_z)^{-1}t_xt_zr_z,\quad (t_xt_yt_z)t_zt_yr_yt_xt_zr_z.\]
Figure \ref{fig:graph} shows the graph, which is the one given in \cite{BRWW}. Since the $T_4$-action folds up the opposite faces, the graph is connected in $\mathbb{R}^3/T_4$.

\begin{figure}[h]
\includegraphics{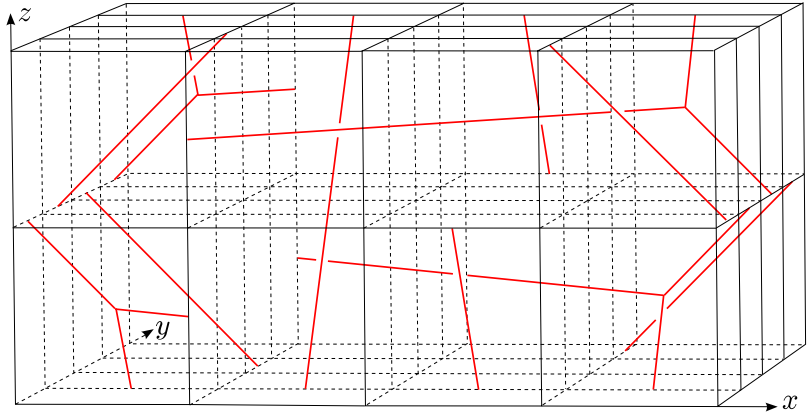}
\caption{Red lines indicate the graph $p_0^{-1}(\beta)$ in $[0,2]^2\times[0,1]$.}\label{fig:graph}
\end{figure}

Abstractly this graph is the complete graph with $4$ vertices. It has fundamental group the free group of rank $3$. To compute the image of $\pi_1(p_0^{-1}(\beta))$ in $T_4$, we can choose a base point of the graph in the interior of $[0,2]^2\times[0,1]$ and find the image of each generator of $\pi_1(p_0^{-1}(\beta))$. Note that once an edge passes through a face of the fundamental domain of $T_4$ it corresponds to one of the six elements
\[t_xt_x,\quad t_yt_y,\quad t_xt_yt_z,\quad (t_xt_x)^{-1},\quad (t_yt_y)^{-1},\quad (t_xt_yt_z)^{-1}.\]
As a result, we have
\[\hat{i}_*(\pi_1(p_0^{-1}(\beta)))=\langle 6t_x,6t_y,6t_{1/2}\rangle=T_{6^3/2}=T_{108}.\]
This finishes the proof of the two claims in the case of $([\text{I}4_132],\beta)$.

By Proposition \ref{pro:subgroup}, the possible finite index normal translation subgroups $T$ of $[\text{I}4_132]$ is $T_{4n^3}$ or $T_{8n^3}$ or $T_{16n^3}$.

If $3\mid n$, then $T$ is a subgroup of $T_{108}$ and $T_{108}\cdot T=T_{108}\neq T_4$.

If $3\nmid n$, then $T_{108}\cdot T_{4n^3}=T_4$ for any $n\in \mathbb{Z}_+$. Hence $T_{108}\cdot T_{8n^3}=T_{108}\cdot T_{16n^3}=T_4$ for any $n\in \mathbb{Z}_+$ satisfying $3\nmid n$.

\bibliographystyle{amsalpha}

\end{document}